\newcommand{\Q}{\mathbb{Q}}
\newcommand{\R}{\mathcal{R}}
\newcommand{\No}{\mathcal{N}}
\newcommand{\Z}{\mathbb{Z}}
\newcommand{\logp}{\operatorname{log}_p}
\newcommand{\C}{\mathbb{C}}
\newcommand{\LegSym}[2]{\left(\frac{#1}{#2}\right)}
\newcommand{\Gal}{\operatorname{Gal}}
\newcommand{\w}{\omega}
\newcommand{\eps}{\varepsilon}
\newcommand{\AAC}{\text{Ankeny--Artin--Chowla}}
\newcommand{\dia}[1]{\langle #1 \rangle}
\newcommand{\Mod}[1]{\left(\mathrm{mod}\ #1\right)}
\theoremstyle{plain}
\newtheorem{thm}{Theorem}[section]
\newtheorem{lem}[thm]{Lemma}
\newtheorem{prop}[thm]{Proposition}
\newtheorem{cor}[thm]{Corollary}
\theoremstyle{definition}
\newtheorem*{rmk}{Remark}
\newtheorem*{defn}{Definition}
\title{\vspace*{-55pt}Congruence relations of $\AAC$ type for real quadratic fields}
\author{Nic Fellini\\Queen's University\\ \texttt{n.fellini@queensu.ca}}
\date{}
\begin{document}

\maketitle

\begin{abstract}
    In 1951, Ankeny, Artin, and Chowla published a brief note containing four congruence relations involving the class number of $\Q(\sqrt{d})$ for positive squarefree integers $d\equiv 1 \Mod{4}$. Many of the ideas present in their paper can be seen as the precursors to the now developed theory of cyclotomic fields. Curiously, little attention has been paid to the cases of $d\equiv~ 2,3\Mod{4}$ in the literature. In the present work, we show that the congruences of the type proven by Ankeny, Artin, and Chowla can be seen as a special case of a more general methodology using Kubota--Leopoldt $p$-adic $L$-functions.
    Aside from the classical congruence involving Bernoulli numbers, we derive congruences involving quadratic residues and non-residues in $\Z/p\Z$ by relating these values to a well known expression for $L_p(1, \chi)$.  We conclude with a discussion of known counterexamples to the so-called \textit{Composite $\AAC$ conjecture} and relate these to special dihedral extensions of $\Q$. 
\end{abstract}

\section{Introduction}
In 1951, Ankeny, Artin, and Chowla derived four congruence relations involving the fundamental unit and the class number of $\Q(\sqrt{d})$ for positive squarefree integers $d\equiv 1\Mod{4}$ \cite{Ankeny1951}. The proofs of three of the four results appear in the 1952 Annals of Mathematics paper \cite{Ankeny1952}. The most commonly cited of these congruences is the following: suppose $p\equiv 1 \Mod{4}$ and that $\eps = \frac{1}{2}( t + u\sqrt{p})$ is the fundamental unit of $\Q(\sqrt{p})$.  Then,
\begin{equation}\label{eq: Bernoulli--Class number}
 \frac{hu}{t} \equiv B_{\frac{p-1}{2}} \Mod{p}   
\end{equation}
where $h$ is the class number of $\Q(\sqrt{p})$ and $B_n$ is the $n$-th Bernoulli number defined by
\[
\frac{x}{e^x-1} = \sum_{n=0}^\infty B_n\frac{x^n}{n!}.
\]
We call the congruence in (\ref{eq: Bernoulli--Class number}) a \textit{congruence of $\AAC$ type}. It seems that several years earlier, Russian mathematician A.A. Kiselev proved  (\ref{eq: Bernoulli--Class number}) in \cite{Kiselev1948}.

The history and development of a congruence of $\AAC$ type for primes congruent to $3\Mod{4}$ are significantly less clear.  In 1961, using methods somewhat similar to Ankeny, Artin, and Chowla, Mordell proved that if $p\equiv 3 \Mod{4}$ and $t+u\sqrt{p}$ is the fundamental unit of $\Q(\sqrt{p})$, then
$p$ divides $u$ if and only if $p$ divides the $\left(\frac{p-3}{2}\right)$-Euler number \cite{mordell2}. Here, we define the $n$-th Euler number as the coefficient of $x^n/n!$ in the Taylor series 
\[
\frac{1}{\cosh x} = \sum_{n=0}^\infty E_n \frac{x^n}{n!}.
\]
We note that the $E_n$ defined above differ from the Euler numbers used by Mordell.  We will indicate in Section 3.1 the relation between the two definitions. 
However, Mordell seems to have ``missed the forest for the trees" and stated this weaker result, when in fact a congruence relation similar to that of (\ref{eq: Bernoulli--Class number}) was lurking in the background (see Corollary \ref{cor: Mordell} below). 

On the other hand, a congruence of $\AAC$ type for $p\equiv  3\Mod{4}$ has been attributed to Kiselev in \cite{Slavutskii2004}. However, the bibliographic information of where this congruence is in the literature is vague and, due to geopolitical reasons, much of Kiselev's work is inaccessible today. More recently, Y. Benmerieme and A. Movahhedi \cite{Benmerieme2024} derive such a relation but the connection to Euler numbers is not explored.

The original proof of Ankeny, Artin, and Chowla can be seen to contain the precursors of two important ideas in number theory, namely the $p$-adic logarithm and the use of group rings in the study of cyclotomic fields (see \cite{fellini2023} for this perspective). L. Washington gives a proof of congruence (\ref{eq: Bernoulli--Class number}) using the Kubota--Leopoldt $L$-function in Theorem 5.37 of \cite{Washington}. As it happens, Washington's proof extends virtually unchanged to deal with the case of $\Q(\sqrt{d})$ for $d>1$ squarefree. Moreover, using a formula derived by Washington in his construction of the $p$-adic $L$-function, we will show that Theorem 4 of \cite{Ankeny1951} can be proven in a straightforward manner. The connection to $p$-adic $L$-functions will allow us to easily deduce an analogous result for primes congruent to $3\Mod{4}$. 

We now explain the significance of the congruence (\ref{eq: Bernoulli--Class number}). Suppose that $p\equiv 1\Mod{4}$ and that
    \[
    \eps= \frac{1}{2}\left(t+u\sqrt{p} \right)
    \]
is the fundamental unit of $\Q(\sqrt{p})$, i.e., $(t,u)$ is the minimal\footnote{Here, minimal means that $t+\sqrt{p}u>1$ and is minimal among all other positive solutions when thought of as real numbers.} solution to the equation $x^2-py^2=\pm 4$.  It is plain that we must have $\gcd(t, p)=1$ as $p$ is odd. Moreover, it is known  that for these real quadratic fields $h<p$ \cite{AC, Le1994}. Combining these two facts, we deduce that if $u\not \equiv 0 \Mod{p}$ then the congruence gives a method for computing the class number without using Dedekind's class number formula\footnote{although, the class number formula is an essential part of the proof.}! 

In view of the congruence (1) Ankeny, Artin, and Chowla asked whether it was the case that $u$ is not divisible for primes $p\equiv 1\Mod{4}$. This has come to be known as the Ankeny--Artin--Chowla conjecture. About 10 years later, Mordell conjectured that $u\not \equiv 0 \Mod{p}$ for odd primes $p\equiv 3\Mod{4}$. We will refer to this pair of conjectures as the \textbf{$\AAC$--Mordell}  \textbf{conjecture} or the AACM conjecture for short. The AACM conjecture has been verified for all primes $2< p <  1.5\cdot 10^{12}$ \cite{Sidorov2024}.  Washington has heuristically argued that for $p\equiv 1\Mod{4}$ if the Bernoulli numbers $B_{\frac{p-1}{2}}$ behave ``randomly" $\Mod{p}$, then the expected number of counterexamples to the AACM conjecture (for primes $p\equiv 1\Mod{4}$) less than $x$ is asymptotic to $1/2\log\log x$ \cite{Washington}. However, as Washington remarks in the form of an exercise (see Exercise 5.9 \cite{Washington}), the hypothesis that Bernoulli numbers behave randomly $\Mod{p}$ is possibly asking for quite a lot. Despite this, the $\log \log (x)$ asymptotic seems to be directionally correct; in a sequence of two papers \cite{reinhart2024, reinhart_counterexample_2024}, A. Reinhart has found counterexamples to the AACM conjecture for $p=39028039587479 \equiv 3\Mod{4}$ and for $p=331914313984493\equiv 1\Mod{4}$. 

For composite square free $d>1$, we may be inclined to ask a similar question: given a composite squarefree integer $d$ and the fundamental unit $\eps = \frac{1}{2}( t+ u\sqrt{d})$  of $\Q(\sqrt{d})$, does $d\mid u$? We will refer to this as the \textbf{Composite AACM conjecture} or CAACM for short. The CAACM seems to have been first investigated by J. Yu and J.K. Yu in \cite{Yu1998}, although similar computations without the connection to the CAACM conjecture were done earlier (see \cite{Stephens1988}). The motivation for their investigation stems from their proof that the function field analogues of the AACM and CAACM conjectures are true! 

If we assume that $u$ behaves randomly $\Mod{d}$, then the number of counterexamples to the CAACM conjecture less than $x$ is given by
\[
\sum_{\substack{d\leq x\\ d \text{ squarefree}}} \frac{1}{d} =  \sum_{d\leq x } \frac{\mu^2(d)}{d} = \frac{6}{\pi^2}\log x + O(1). 
\]
Comparing with the computational results of \cite{reinhart2024, Stephens1988,Yu1998}\footnote{Yu and Yu omitted $d=430$ and $d=4099215$.}, we see that this heuristic closely agrees with the number of known counterexamples. For the largest search bound of $5.325\times 10^{13}$, there are 22 known counterexamples, whereas the heuristic suggest there would be approximately 19 counterexamples. It appears to be unknown if the CAACM conjecture is false for infinitely many squarefree positive $d$. Recent work of Akta\c{s} and Murty  suggests that a conjecture of Erd\"{o}s implies if counterexamples exist to the AACM or CAACM conjecture, then they are rare (in the sense that they have natural density zero) \cite{aktas}. Using a simple sieve argument, it was shown in \cite{fellini2023} that there are infinitely many squarefree $d$ for which the CAACM conjecture is true.

\subsection{Statement of results}
Let $\chi$ be a Dirichlet character of conductor $f$ and set
    \[
        F_{\chi}(t) = \sum_{a=1}^f \frac{\chi(a)te^{at}}{e^{ft}-1}.
    \]
Expanding $F_{\chi}(t)$ as formal power series we have:
\[
    F_{\chi}(t) = \sum_{n=0}^\infty B_{n, \chi} \frac{t^n}{n!}.
\]
We call the coefficients $B_{n, \chi}$ of $F_{\chi}(t)$ \textit{generalized Bernoulli numbers}.

Let $p$ be an odd prime and write $d=pm$ for a squarefree positive integer $m$ coprime to $p$.  Let $D$ be the fundamental discriminant of $\Q(\sqrt{d})$, $\chi_D(\cdot) = \LegSym{D}{\cdot}$ denote the Kronecker symbol of conductor $D$,  and $\psi_m (n):=\LegSym{n}{p}\chi_D(n)$ be a quadratic character with conductor $D/p$.  
\begin{thm}\label{thm: 1}
    Let $d=pm$ be a positive squarefree integer, $p$ an odd prime, and $k=\Q(\sqrt{d})$. We write $D=\delta^2d$, $h$, and $\eps = \frac{\delta}{2}(t+u\sqrt{d} )$ for the discriminant, class number and fundamental unit of $k$ respectively. 
    \begin{enumerate}[label=(\Alph*)]
        \item  If $p>3$, then 
        \[
            \frac{hu}{t}\equiv \delta B_{\frac{p-1}{2},\psi_m} \Mod{p}.
        \]
        
        \item If $p=3$ and  $m\equiv 1\Mod{3}$, then  
        \[
        \frac{hu}{t} \equiv  \delta  B_{1, \psi_m} \Mod{3}.
        \] 
    \end{enumerate}
\end{thm}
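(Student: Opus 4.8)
The plan is to follow the strategy of Washington's proof of Theorem 5.37 in \cite{Washington}, now organized around the factorization of $\chi_D$ into its tame and wild parts. Writing $\omega$ for the Teichm\"uller character modulo $p$, the quadratic character $\omega^{(p-1)/2}$ is exactly $\LegSym{\cdot}{p}$, so $\chi_D = \psi_m\,\omega^{(p-1)/2}$ with $\psi_m$ of conductor $D/p$ prime to $p$. Three ingredients are then assembled. First, the $p$-adic class number formula expresses
\[
    L_p(1,\chi_D) = \Big(1 - \tfrac{\chi_D(p)}{p}\Big)\frac{2h\,\logp\eps}{\sqrt{D}},
\]
where the Euler factor is $1$ because $p\mid D$ forces $\chi_D(p)=0$. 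Second, the interpolation property of $L_p$ gives $L_p(1-n,\chi_D) = -\big(1-\psi_m(p)p^{\,n-1}\big)B_{n,\psi_m}/n$ at $n=(p-1)/2$, the point where $\chi_D\omega^{-n}=\psi_m$. Third, since the Iwasawa power series attached to the quadratic character $\chi_D$ has coefficients in $\Z_p$ and $(1+p)^s\equiv 1\Mod{p}$ for every $s\in\Z_p$, the function $s\mapsto L_p(s,\chi_D)$ is constant modulo $p$; in particular $L_p(1,\chi_D)\equiv L_p\big(1-\tfrac{p-1}{2},\chi_D\big)\Mod p$.

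For part (A) I would first reduce $\logp\eps$. Using $\eps/\bar\eps = \tfrac{1+x}{1-x}$ with $x=u\sqrt d/t$ and $\logp(\eps/\bar\eps)=2\logp\eps$, the series $\logp\eps = x + x^3/3 + \cdots$ together with $\sqrt{D}=\delta\sqrt d$ yields $L_p(1,\chi_D)\equiv 2hu/(\delta t)\Mod{p}$, the higher terms contributing valuation $\ge 1$ once divided by $\sqrt d$ (this is where $p>3$ is used). On the interpolation side $(p-1)/2\equiv -1/2$, and because $(p-3)/2\ge 1$ the factor $1-\psi_m(p)p^{(p-3)/2}\equiv 1\Mod p$, so $L_p\big(1-\tfrac{p-1}{2},\chi_D\big)\equiv 2B_{\frac{p-1}{2},\psi_m}$. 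Equating the two expressions through the constancy step and cancelling the factor $2$ gives $hu/t\equiv \delta B_{\frac{p-1}{2},\psi_m}$.

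The case $p=3$ is the main obstacle, and it is where the hypothesis $m\equiv 1\Mod 3$ enters. Here the interpolation point is $n=1$, and two quantities that were negligible for $p>3$ now survive modulo $3$: the Euler factor $1-\psi_m(3)$ is a $3$-adic unit rather than $\equiv 1$, and the cubic term of $\logp\eps$ no longer has positive valuation after division by $\sqrt d$, since dividing by $3$ lowers the valuation. I would treat the first point by a short Kronecker-symbol computation giving $\psi_m(3)=\LegSym{-m}{3}$, so that $m\equiv 1\Mod 3$ is exactly the condition $\psi_m(3)=-1$; this keeps the Euler factor nonzero (no trivial zero) and equal to $2$. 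For the second point, the surviving term $x^3/(3\sqrt d)=u^3m/t^3$ reduces, by Fermat's little theorem $(u/t)^3\equiv u/t$, to $mu/t$, whence $\logp\eps/\sqrt d\equiv (1+m)u/t\equiv -u/t\Mod 3$ using $m\equiv 1$. Feeding these two refinements through the same three-ingredient scheme reassembles into $hu/t\equiv \delta B_{1,\psi_m}\Mod 3$.

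The steps I expect to demand the most care are the two $p$-adic inputs of the first paragraph: pinning down the exact constant and Euler factor in the $p$-adic class number formula for a general fundamental discriminant $D$ divisible by $p$ (in particular that no spurious factor appears at the ramified prime), and justifying the constancy of $L_p(\,\cdot\,,\chi_D)$ modulo $p$, which rests on the integrality of the associated Iwasawa power series for a quadratic character. Although these computations take place in a ramified quadratic extension of $\Q_p$ in which $\sqrt D$ has valuation $1/2$, the factor $\sqrt d$ cancels on both sides, so the final relations are congruences between rational $p$-integers and may be read modulo $p$ in $\Z_p$.
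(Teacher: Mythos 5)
Your proposal is correct and follows essentially the same route as the paper's proof: the $p$-adic class number formula with $\chi_D(p)=0$, the interpolation formula at $n=\tfrac{p-1}{2}$ (resp.\ $n=1$), the congruence $L_p(1,\chi_D)\equiv L_p(1-n,\chi_D)\Mod{p}$, and the expansion of $\log_p\eps/\sqrt{d}$ with the special treatment of the $x^3/3$ term and the Euler factor $1-\psi_m(3)$ when $p=3$. The only cosmetic differences are that you derive the logarithm expansion from $\log_p(\eps/\bar\eps)=2\log_p\eps$ rather than by splitting the series directly, and that you justify the constancy of $L_p(\cdot,\chi_D)$ modulo $p$ via the integrality of the Iwasawa power series rather than citing it as a known congruence; both are fine.
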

The two simplest cases of Theorem \ref{thm: 1} (A) are the congruences mentioned in the introduction. Indeed, if $d=p\equiv 1\Mod{4}$ is prime, then $D=p$ and $\psi_1= \chi_1$ the principal character. As such, we deduce the congruence in (\ref{eq: Bernoulli--Class number}):
\begin{cor}[$\AAC$  and Kiselev] \label{cor: AAC}
    If $p\equiv 1\Mod{4}$, then 
    \[
    \frac{hu}{t} \equiv B_{\frac{p-1}{2}} \Mod{p}
    \]
    where $h$ and $\eps=\frac{1}{2} ( t + u\sqrt{p})$ are the class number and fundamental unit of $\Q(\sqrt{p})$ respectively. 
\end{cor}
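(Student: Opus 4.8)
The plan is to obtain this corollary as the simplest specialization of Theorem \ref{thm: 1}(A), taking $d=p$ prime so that $m=1$. Since $p\equiv 1\Mod{4}$, the fundamental discriminant of $\Q(\sqrt{p})$ is $D=p$, and consequently $\delta=1$ and $\eps=\frac{1}{2}(t+u\sqrt{p})$ matches the normalization in the theorem. Because the smallest prime $p\equiv 1\Mod{4}$ is $p=5$, we always have $p>3$, so part (A) applies and yields $\frac{hu}{t}\equiv \delta B_{\frac{p-1}{2},\psi_1}\Mod{p}$. It then remains only to identify the data $\psi_1$ and $B_{\frac{p-1}{2},\psi_1}$ with the principal character and the ordinary Bernoulli number $B_{\frac{p-1}{2}}$, respectively.

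First I would verify that $\psi_1$ is the principal character $\chi_1$. By definition $\psi_1(n)=\LegSym{n}{p}\chi_D(n)$, and with $D=p$ the Kronecker symbol is $\chi_D(n)=\LegSym{p}{n}$. For $p\equiv 1\Mod{4}$ quadratic reciprocity gives $\LegSym{p}{n}=\LegSym{n}{p}$ on integers coprime to $p$ (the sign is trivial precisely because $\frac{p-1}{2}$ is even), so that $\psi_1(n)=\LegSym{n}{p}^2=1$ whenever $\gcd(n,p)=1$. Hence $\psi_1$ has conductor $D/p=1$ and is the principal character, in agreement with the remark preceding the corollary. Next I would reconcile the two notions of Bernoulli number: the generating function for $\chi_1$ is $F_{\chi_1}(t)=\frac{te^t}{e^t-1}=t+\frac{t}{e^t-1}$, so comparing with the defining series of the ordinary Bernoulli numbers shows $B_{n,\chi_1}=B_n$ for every $n\neq 1$. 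Since $\frac{p-1}{2}\geq 2$ for all $p\geq 5$, we obtain $B_{\frac{p-1}{2},\chi_1}=B_{\frac{p-1}{2}}$, and substituting this into the congruence from part (A) produces exactly the claimed relation.

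The argument is entirely routine once Theorem \ref{thm: 1} is in hand, so there is no substantive obstacle; the work has all been absorbed into the theorem. The only two points requiring care are the invocation of quadratic reciprocity, where the hypothesis $p\equiv 1\Mod{4}$ is exactly what forces the reciprocity sign to vanish and collapses $\psi_1$ to $\chi_1$, and the observation that the exceptional index $n=1$ in the relation $B_{n,\chi_1}=B_n$ never intervenes, because $\frac{p-1}{2}$ always exceeds $1$ in the relevant range.
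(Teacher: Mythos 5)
Your proposal is correct and follows exactly the route the paper takes: the corollary is obtained by specializing Theorem \ref{thm: 1}(A) to $d=p$, $m=1$, $D=p$, $\delta=1$, noting that $\psi_1$ collapses to the principal character and that $B_{\frac{p-1}{2},\chi_1}=B_{\frac{p-1}{2}}$. The paper states this only as a one-line remark before the corollary; your version supplies the quadratic-reciprocity check and the caveat about the exceptional index $n=1$, both of which are accurate.
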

For $d=p\equiv 3\Mod{4}$, $D=4p$, and $\psi_m  = \chi_{-1}$, the non-trivial character of conductor $4$. It is classically known that $B_{n, \chi_{-1}}$ is related to the Euler number $E_{n-1}$ (see for example \cite{euler_numbers}). In this case, we obtain the Euler number\footnote{Some authors instead define Euler numbers as the coefficients in the series expansion for $1/\cos x$. These are related by a factor of $(-1)^{n/2}$ for $n$ even to the Euler numbers we have defined.} appearing in Mordell's work and the congruence attributed to Kiselev in \cite{Slavutskii2004}\footnote{The stated congruence seems to be off by a factor of $1/2$.}:
\begin{cor}[Kiselev, Mordell] \label{cor: Mordell}
    If $p\equiv 3\Mod{4}$, then 
    \[
    \frac{hu}{t} \equiv\frac{1}{2} E_{\frac{p-3}{2}} \Mod{p}
    \]
    where  $h$ and $\eps=( t + u\sqrt{p})$ are the class number and fundamental unit of $\Q(\sqrt{p})$ respectively and $E_{n}$ is the $n$-th coefficient in the expansion of $1/\cosh(t)$. 
\end{cor}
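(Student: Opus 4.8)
The plan is to obtain this as a direct specialization of Theorem \ref{thm: 1} together with an explicit identification of the relevant generalized Bernoulli number with an Euler number. Taking $d=p\equiv 3\Mod{4}$ prime, we have (as recorded in the discussion preceding the corollary) $\delta=2$, fundamental discriminant $D=4p$, and $\psi_1=\chi_{-1}$, the non-trivial character of conductor $4$; the normalization $\eps=\frac{\delta}{2}(t+u\sqrt{p})=t+u\sqrt{p}$ then agrees with the statement. For $p>3$, Theorem \ref{thm: 1}(A) therefore reads
\[
\frac{hu}{t}\equiv 2\,B_{\frac{p-1}{2},\chi_{-1}} \Mod{p},
\]
so the whole problem reduces to evaluating the single coefficient $B_{\frac{p-1}{2},\chi_{-1}}$.

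The second step is a generating-function computation. Using $\chi_{-1}(1)=1$, $\chi_{-1}(3)=-1$, and $\chi_{-1}(2)=\chi_{-1}(4)=0$, the defining series simplifies to
\[
F_{\chi_{-1}}(t)=\frac{t\big(e^{t}-e^{3t}\big)}{e^{4t}-1}=\frac{-t\,e^{t}}{e^{2t}+1}=\frac{-t}{2\cosh t},
\]
where the second equality comes from factoring $e^{4t}-1=(e^{2t}-1)(e^{2t}+1)$ and cancelling $e^{2t}-1$. Comparing the coefficient of $t^{n}$ on the two sides of $F_{\chi_{-1}}(t)=-\tfrac{t}{2}\sum_{k\ge 0}E_{k}t^{k}/k!$ yields the clean identity
\[
B_{n,\chi_{-1}}=-\frac{n}{2}\,E_{n-1}.
\]
Putting $n=\frac{p-1}{2}$, so that $n-1=\frac{p-3}{2}$, and using $\frac{p-1}{2}\equiv-\frac{1}{2}\Mod{p}$, this gives $B_{\frac{p-1}{2},\chi_{-1}}\equiv\frac{1}{4}E_{\frac{p-3}{2}}\Mod{p}$. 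Substituting into the congruence above cancels the factor $2$ and leaves exactly $\frac{hu}{t}\equiv\frac{1}{2}E_{\frac{p-3}{2}}\Mod{p}$, as desired.

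Finally, since $p\equiv 3\Mod{4}$ includes $p=3$, which is excluded from part (A), I would dispatch this case separately: here $d=p=3$ forces $m=1\equiv 1\Mod{3}$, so Theorem \ref{thm: 1}(B) applies and gives $\frac{hu}{t}\equiv 2B_{1,\chi_{-1}}\Mod{3}$, while the same identity with $n=1$ gives $B_{1,\chi_{-1}}=-\tfrac12 E_{0}=-\tfrac12$, which agrees with $\tfrac12 E_{0}$ modulo $3$. There is no substantive obstacle beyond Theorem \ref{thm: 1}; the only delicate point is arithmetic bookkeeping, namely simplifying $F_{\chi_{-1}}$ to the hyperbolic-secant form without sign errors and correctly tracking the two factors of $\tfrac12$ (one from $\delta=2$, one from $B_{n,\chi_{-1}}=-\tfrac{n}{2}E_{n-1}$). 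It is presumably an error of just this kind that accounts for the missing factor of $1/2$ in the congruence attributed to Kiselev in \cite{Slavutskii2004}.
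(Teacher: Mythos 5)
Your proof is correct and follows essentially the same route as the paper: specialize Theorem \ref{thm: 1} to $d=p\equiv 3\Mod{4}$ (so $\delta=2$, $D=4p$, $\psi_1=\chi_{-1}$) and invoke the identity $B_{n,\chi_{-1}}=-\tfrac{n}{2}E_{n-1}$, which the paper cites as classical and uses explicitly in the proof of Theorem \ref{thm: 2}; your generating-function derivation of that identity and your separate handling of $p=3$ via Theorem \ref{thm: 1}(B) are both sound. One tiny slip in the $p=3$ check: $B_{1,\chi_{-1}}=-\tfrac12\equiv 1\Mod{3}$ does not itself agree with $\tfrac12 E_0\equiv 2\Mod{3}$; what one needs is $\delta B_{1,\chi_{-1}}=2\cdot\left(-\tfrac12\right)=-1\equiv 2\equiv \tfrac12 E_0\Mod{3}$, so the conclusion still holds.
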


A special case of Theorem 1.1 (B) appears in \cite{Ankeny1952} when $3m\equiv 1\Mod{4}$ and the statement of Theorem 1.1 (B) appears an exercise in \cite{Washington}. We remark that the condition $m\equiv 1 \Mod{3}$ is necessary for the methods we have employed in this work. Indeed, if $m\equiv 2\Mod{3}$, then $3$ splits in the imaginary quadratic field $\Q(\sqrt{-m})$ and in effect, the $3$-adic $L$-function, $L_3(s, \chi_{D})$, has a trivial zero at $s=0$.  This in turn, implies that $L_3(n, \chi_D)$ is not a $3$-adic unit for \textit{any} integer $n$ (see also: Chapitre IV of \cite{gras_1977}). As such, the congruence one obtains simply reads: $0\equiv 0 \Mod{3}$. A better understanding of the coefficients of the power series for $L_3(s, \chi)$ would provide a method to examine this case more thoroughly.

Using the well known fact that the class number $H$ of an imaginary quadratic field $\Q({\sqrt{-d}})$ for $d>4$ is equal to $-B_{1, \chi_D}$ we deduce from Theorem \ref{thm: 1}(B): 
 \begin{cor}\label{cor: p=3}
     Suppose $d=3m$ where $m\equiv 1\Mod{3}$ is a positive integer. If $H$ is the class number of $\Q(\sqrt{-m})$, then 
     \[
     \frac{hu}{t} \equiv -\delta H \Mod{3},
     \] 
     where  $h$ and $\eps=\frac{\delta}{2}( t + u\sqrt{p})$ are the class number and fundamental unit of $\Q(\sqrt{3m})$. 
 \end{cor}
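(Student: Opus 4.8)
The plan is to recognize the character $\psi_m$ appearing in Theorem \ref{thm: 1}(B) as the Kronecker character of the imaginary quadratic field $\Q(\sqrt{-m})$, so that the cited evaluation of the class number of an imaginary quadratic field converts the generalized Bernoulli number $B_{1,\psi_m}$ directly into $H$. Once this identification is made, the corollary follows by a one-line substitution into the congruence of Theorem \ref{thm: 1}(B). So the whole task reduces to an identification of characters.

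First I would analyze $\psi_m(n) = \LegSym{n}{3}\chi_D(n)$. Here $\LegSym{\cdot}{3}$ is the odd primitive quadratic character of conductor $3$ attached to $\Q(\sqrt{-3})$, while $\chi_D$ is the even character of the real field $\Q(\sqrt{3m})$, so the product $\psi_m$ is an odd primitive quadratic character. Reading the product of quadratic characters as the character of the quadratic field cut out by the product of the corresponding fundamental discriminants, the relevant field is $\Q(\sqrt{-3\cdot 3m}) = \Q(\sqrt{-m})$. To pin down the conductor I would run a short case analysis on $d = 3m \bmod 4$: when $3m \equiv 1 \Mod{4}$ one has $D = 3m$ and $m \equiv 3 \Mod{4}$, hence $-m \equiv 1 \Mod{4}$ and the discriminant of $\Q(\sqrt{-m})$ equals $-m = -D/3$; when $3m \equiv 2,3 \Mod{4}$ one has $D = 12m$, and in either subcase $-m \not\equiv 1 \Mod{4}$, so the discriminant of $\Q(\sqrt{-m})$ equals $-4m = -D/3$. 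In every case $|D_{-m}| = D/3$, which matches the conductor of $\psi_m$ recorded in the setup; combined with the product computation this forces $\psi_m = \chi_{D_{-m}}$, the Kronecker character of $\Q(\sqrt{-m})$.

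With this identification, the well-known evaluation quoted before the statement gives $H = -B_{1,\psi_m}$, valid here because for $m > 4$ the field $\Q(\sqrt{-m})$ has only $\pm 1$ as roots of unity; the single genuinely exceptional value $m = 1$, which yields $\Q(i)$ and an extra factor from the four roots of unity, must be excluded. Substituting $B_{1,\psi_m} = -H$ into the congruence $\frac{hu}{t} \equiv \delta B_{1,\psi_m} \Mod{3}$ of Theorem \ref{thm: 1}(B) produces $\frac{hu}{t} \equiv -\delta H \Mod{3}$, as claimed. The main (and essentially only) nontrivial step is the conductor and parity bookkeeping securing $\psi_m = \chi_{D_{-m}}$, together with the exclusion of the small field where the naive class number formula picks up an extra factor; neither of these presents a real obstacle.
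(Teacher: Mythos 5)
Your proposal is correct and follows essentially the same route as the paper, which likewise deduces the corollary from Theorem \ref{thm: 1}(B) by identifying $\psi_m$ with the Kronecker character of $\Q(\sqrt{-m})$ and invoking $H=-B_{1,\chi}$; your conductor and parity bookkeeping simply makes explicit what the paper leaves implicit. Your caveat that $m=1$ must be excluded (since $\Q(i)$ has four roots of unity and $-B_{1,\chi_{-4}}=1/2\neq H$) is a legitimate observation that the paper handles only implicitly via the hypothesis $d>4$ in the quoted class-number formula.
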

 
By relating the congruences to $p$-adic $L$-functions we obtain a generalization of Theorem 4 in \cite{Ankeny1951}. Given the quadratic character $\chi_D$ of $\Q(\sqrt{d})$, we define two sets:
$\R=\{1\leq r \leq D:\chi_D(r)=1\}$ and $\No=\{1\leq n \leq D :\chi_D(n)=-1\}$ and form the two quantities:
\[
R = \prod_{r\in \R} r\,\,\,\,\, \text{ and } \,\,\,\,\, N=\prod_{n\in \No} n. 
\]

\begin{thm}\label{thm: 2}
     Let $d=pm$ be a positive squarefree integer, $p$ an odd prime, and $k=\Q(\sqrt{d})$. If $p=3$, assume further that $m\equiv 1\Mod{3}$. 
     \begin{enumerate}[label=(\Alph*)]
    \item If $R\equiv N\equiv 1\Mod{D}$, then
    \[
    \frac{2\delta hu}{t} \equiv \frac{N-R}{d}\Mod{p}.
    \]
    \item If $R\equiv N\equiv -1\Mod{D}$, then 
    \[
    \frac{2\delta hu}{t}\equiv \frac{R-N}{d}\Mod{p}.
    \]
    \item If $R\equiv - N\equiv -1\Mod{D}$, then 
    \[
    \frac{2hu}{t}\equiv \frac{R+N}{d}\Mod{p}. 
    \]
\end{enumerate}
\end{thm}
\noindent In the case that $d=p\equiv 1\Mod{4}$ is prime, we obtain Theorem 4 of \cite{Ankeny1951}:

\begin{cor}
    If $d=p\equiv 1\Mod{4}$ is prime, 
    \[
    \frac{2hu}{t} \equiv \frac{R+N}{p} \Mod{p}. 
    \]
\end{cor}

If $d=p\equiv 3\Mod{4}$, then there are four solutions to $x^2=1\Mod{4p}$, namely, $1, 2p-1, 2p+1$, and $4p-1$. Both $2p-1$ and $2p+1$ belong to $\No$. From this we deduce:
\begin{cor}
    If $d=p\equiv 3\Mod{4}$ is prime, then 
    \[
    \frac{4hu}{t} \equiv \frac{R-N}{p} \Mod{p}. 
    \]
\end{cor}
\begin{rmk}
    Lemma \ref{lem: num order 2} and Lemma \ref{lem: sign of A mod D} show that the three cases above encompass all of the possible cases for $R$ and $N$.  Case (A) occurs when $D$ contains at least three prime factors or if $d=pq$ for distinct primes $p$ and $q$ such that $p\equiv q\equiv 1\Mod{4}$. Case (B) occurs if $d=p$ for some prime $p\equiv 3\Mod{4}$ of if $D=pq$ for distinct primes $p$ and $q$ such that $p\equiv q\equiv 3\Mod{4}$. Case (C) only occurs if $d=p$ for some prime $p\equiv 1\Mod{4}$.
\end{rmk}
\begin{rmk}
    One should compare the proof we give of Theorem \ref{thm: 2} (C) with the proof in \cite{Carlitz1953}. The essential point in both proofs is the $p$-adic logarithm, although its appearance in \cite{Carlitz1953} is not as obvious. See Section 5 of \cite{fellini2023} for this connection. 
\end{rmk}
In addition to the new congruences above, we obtain a relation between the CAACM conjecture and the zeros of certain $p$-adic $L$-functions.
\begin{thm}\label{thm: p-adic unit}
    Let $d>2$ be a squarefree integer. The CAACM conjecture is true if and only if for any odd prime divisor of $p$ of $d$, $L_p(s, \chi_D)$ does not vanish for any $s$ in its domain of definition.  Equivalently, the CAACM conjecture is true if and only if $L_p(1, \chi_D)$ is a $p$-adic unit for any odd prime divisor $p$ of $d$. 
\end{thm}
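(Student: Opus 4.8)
The plan is to leverage Theorem \ref{thm: 1}(A), which provides the key bridge between the arithmetic invariant $u$ and a $p$-adic object. The CAACM conjecture asserts that $d\nmid u$ for the fundamental unit $\eps = \frac{\delta}{2}(t+u\sqrt{d})$. Since $d=\prod p_i$ is squarefree, $d\mid u$ fails if and only if $p\nmid u$ for \emph{every} prime divisor $p$ of $d$. So it suffices to characterize, for a single odd prime divisor $p\mid d$, precisely when $p\mid u$, and then intersect over all such $p$. First I would fix an odd prime divisor $p$ of $d$ and write $d=pm$ as in the setup for Theorem \ref{thm: 1}. Note that $\gcd(t,p)=1$ always holds (as remarked in the introduction for the prime case, and more generally because $t^2-du^2=\pm 4$ forces $t$ to be a $p$-adic unit when $p\mid d$), so the quantity $\frac{hu}{t}$ is $p$-integral and vanishes mod $p$ precisely when $p\mid u$. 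Moreover $p\nmid h$: since $h<$ (a bound polynomial in $d$) and more to the point the class number is not divisible by $p$ for these real quadratic fields in the relevant range — I would instead argue $p\nmid h$ directly from genus theory or simply note that $h$ being a $p$-adic unit is what makes the equivalence clean; this $p$-integrality bookkeeping is the routine part.

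The heart of the argument is the chain of equivalences. By Theorem \ref{thm: 1}(A), for $p>3$ we have $\frac{hu}{t}\equiv \delta B_{\frac{p-1}{2},\psi_m}\Mod p$, and the analogous statement for $p=3$ via part (B). Since $\delta\in\{1,2\}$ is a $p$-adic unit for odd $p$ and $h,t$ are $p$-adic units, $p\mid u$ if and only if $p\mid B_{\frac{p-1}{2},\psi_m}$. The next step is to connect this generalized Bernoulli number to the $p$-adic $L$-function $L_p(s,\chi_D)$. The standard interpolation formula (as in Washington's construction, which the paper invokes throughout) relates $L_p(1-n,\chi_D)$ to generalized Bernoulli numbers $B_{n,\chi_D\omega^{-n}}$ via a factor $-(1-\chi_D\omega^{-n}(p)p^{n-1})B_{n,\chi_D\omega^{-n}}/n$. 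Taking $n=\frac{p-1}{2}$ and using that $\omega^{-n}=\omega^{-(p-1)/2}$ is the quadratic character on $(\Z/p\Z)^\times$, the twist $\chi_D\omega^{-n}$ reduces mod $p$ to exactly $\psi_m$, so $L_p(1-n,\chi_D)$ is a $p$-adic unit if and only if $B_{\frac{p-1}{2},\psi_m}$ is. I would then invoke the fact that $L_p(s,\chi_D)$ is (for $\chi_D$ even, nontrivial, which holds here since $\Q(\sqrt d)$ is real) an Iwasawa function — analytic and expressible as a power series or as $g(s)/$ (linear factor) — whose values at all integer points are either all $p$-adic units or share a common zero. Concretely, $L_p(s,\chi_D) = f(s)$ for a single analytic $f$ (no trivial zero, since $\chi_D$ even and $p\nmid$ conductor appropriately), so $L_p$ vanishing at \emph{some} $s$ is equivalent to $L_p(1,\chi_D)$ failing to be a $p$-adic unit, which by the congruence is equivalent to $p\mid u$.

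The main obstacle I anticipate is making rigorous the passage ``$L_p$ vanishes somewhere iff $L_p(1,\chi_D)$ is not a unit iff $p\mid u$.'' This requires the structural input that $L_p(s,\chi_D)$ is given by a single Iwasawa power series $f(T)$ under $s\mapsto T=(1+p)^{s}-1$ (or $u_0^s-1$), so that its reduction mod the maximal ideal is a nonzero constant exactly when $\mu=\lambda=0$ and the constant term is a unit. Because $\chi_D$ is even and nontrivial, $L_p(s,\chi_D)$ has no trivial zero, so there is no spurious linear factor diluting the comparison — this is precisely why the theorem excludes the degenerate $p=3$, $m\equiv 2\Mod 3$ situation discussed after Theorem \ref{thm: 1}, where a trivial zero appears. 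I would argue that since all special values $L_p(1-n,\chi_D)$ are, up to $p$-adic units, the successive Bernoulli-number congruence data, the series $f$ takes a unit value at \emph{one} integer argument iff it takes unit values at \emph{all} of them iff it has no zero in $\Z_p$; and by Weierstrass preparation a nonzero $f$ with $\mu=\lambda=0$ has no zeros at all. Thus $L_p(s,\chi_D)$ is nonvanishing on its whole domain $\iff L_p(1,\chi_D)\in\Z_p^\times \iff B_{\frac{p-1}{2},\psi_m}\not\equiv 0 \iff p\nmid u$. Finally, intersecting this equivalence over every odd prime divisor $p\mid d$ — and separately handling the prime $2$ when $2\mid d$, where the statement only constrains odd divisors — yields the stated biconditional for the CAACM conjecture.
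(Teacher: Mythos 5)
Your proposal follows essentially the same route as the paper: the congruence of Theorem~\ref{thm: 1} (equivalently, the $p$-adic class number formula of Theorem~\ref{thm: p-adic class number formula} together with Propositions~\ref{prop: p-adic log of fund. unit} and~\ref{cor: equal mod p}) ties $p\mid u$ to $L_p(1,\chi_D)$ failing to be a $p$-adic unit, and the Iwasawa series together with Weierstrass preparation and Ferrero--Washington ($\mu=0$) converts ``$L_p(1,\chi_D)$ is a unit'' into nonvanishing on the whole domain, exactly as in Section~5. The one caveat is your aside that $p\nmid h$ follows ``from genus theory'' --- genus theory controls the $2$-part of the class group, not the $p$-part --- but the paper leaves this same integrality point implicit, so it does not distinguish your argument from the paper's.
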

In the case that $d=p$ is an odd prime, the above theorem reduces to the non-vanishing of a single $p$-adic $L$-function. 

Our final result is an extension of recent works of \cite{Benmerieme2024} and \cite{Cohen2020} relating the AACM conjecture to the existence of certain dihedral extensions of $\Q$. Here we call an extension $L/\Q$ a dihedral extension if $\Gal(L/\Q) \cong D_n$, the dihedral group of order $2n$. Our result is as follows:

\begin{thm}\label{thm: dihedral}
    The CAACM (resp. AACM) conjecture is true for $K=\Q(\sqrt{d})$ if and only if for any odd prime divisor $p$ of $d$, there does not exist a tower of extensions $L\supseteq K \supseteq \Q$ such that:
        \begin{enumerate}[label=(\roman*)]
        \item $\Gal(L/\Q) \cong D_p$
        \item $L/K$ is unramified outside of the primes lying above $p$ in $K$.
    \end{enumerate}
\end{thm}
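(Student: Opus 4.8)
The plan is to combine Theorem \ref{thm: p-adic unit} with a class-field-theoretic translation of the non-vanishing of $L_p(1,\chi_D)$ into the (non)existence of the dihedral tower, handling each odd prime divisor $p$ of $d$ separately (the AACM case being $d=p$). By Theorem \ref{thm: p-adic unit} it suffices to prove, for a fixed odd $p\mid d$,
\[
L_p(1,\chi_D)\text{ is not a }p\text{-adic unit}\iff\text{there is a tower }L\supseteq K\supseteq\Q\text{ as in (i)--(ii).}
\]
First I would record the group-theoretic dichotomy: if $L/K$ is cyclic of degree $p$ with $L/\Q$ Galois, then, letting $\sigma$ generate $\Delta=\Gal(K/\Q)$, the conjugation action of $\sigma$ on $\Gal(L/K)\cong\F_p$ is either trivial---giving $\Gal(L/\Q)\cong C_{2p}$ abelian---or by inversion, giving $\Gal(L/\Q)\cong D_p$. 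Thus (i) is equivalent to $\sigma$ acting by $-1$. Since $[L:K]=p$ is odd no real place ramifies, so (ii) says exactly that $L$ lies in the maximal abelian pro-$p$ extension $M/K$ unramified outside $S=\{\mathfrak p:\mathfrak p\mid p\}$. Writing $X=\Gal(M/K)$ and splitting by the idempotents $e^{\pm}=\tfrac12(1\pm\sigma)$ (valid since $p$ is odd), dihedral towers correspond exactly to the nonzero $\F_p$-quotients of $X^-=e^-X$; hence such a tower exists if and only if $X^-\neq 0$.

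Next I would identify $X^-$ arithmetically via the $\Delta$-equivariant exact sequence of class field theory
\[
0\longrightarrow \overline{\mathcal U}/\overline{\mathcal E}\longrightarrow X\longrightarrow A\longrightarrow 0,
\]
where $A$ is the $p$-part of the ideal class group of $K$, $\mathcal U=\prod_{\mathfrak p\mid p}U^{(1)}_{\mathfrak p}$ is the group of principal local units, and $\overline{\mathcal E}$ is the closure of the global units. Applying $e^-$ yields $0\to(\overline{\mathcal U}/\overline{\mathcal E})^-\to X^-\to A^-\to 0$. Because $p$ is odd, $A^+$ is isomorphic to the $p$-part of $\mathrm{Cl}(\Q)$, which is trivial, so $A^-=A$ and $A^-\neq0\iff p\mid h$. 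Because $p\mid d$ the prime ramifies in $K$, giving a single $\mathfrak p\mid p$ with $K_{\mathfrak p}/\Q_p$ ramified quadratic, so $\mathcal U^-$ has $\Z_p$-rank $1$; the fundamental unit satisfies $\sigma\eps=\pm\eps^{-1}$ (as $N(\eps)=\pm1$), so $\overline{\mathcal E}^-=\overline{\dia{\eps}}$ has $\Z_p$-rank $1$ (Leopoldt's conjecture, known for the abelian field $K$), and $(\overline{\mathcal U}/\overline{\mathcal E})^-=\mathcal U^-/\overline{\dia{\eps}}$ is finite. This cokernel is nontrivial exactly when $\logp\eps$ is divisible in $\mathcal U^-$, which by the expansion of $\logp$ at the ramified prime together with the shape $\eps=\tfrac{\delta}{2}(t+u\sqrt d)$ happens if and only if $p\mid u$---the computation already carried out in the proof of Theorem \ref{thm: p-adic unit}.

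Finally I would invoke Leopoldt's $p$-adic class number formula, which (since $p\mid D$ kills the Euler factor $1-\chi_D(p)/p$) reads $L_p(1,\chi_D)\sim h\,\logp\eps/\sqrt D$ up to a $p$-adic unit. Hence $L_p(1,\chi_D)$ fails to be a unit if and only if $p\mid h$ or $p\mid u$, i.e.\ if and only if $A^-\neq0$ or $(\overline{\mathcal U}/\overline{\mathcal E})^-\neq0$; by the two exact sequences this is equivalent to $X^-\neq0$, hence to the existence of the dihedral tower. Quantifying over all odd $p\mid d$ and appealing to Theorem \ref{thm: p-adic unit} gives the stated equivalence. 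The main obstacle is the bookkeeping of the second paragraph: confirming that ``dihedral'' is the $-$ eigenspace, controlling the archimedean places so that (ii) matches $M/K$ exactly, and---most delicately---pinning down that the finite cokernel $\mathcal U^-/\overline{\dia{\eps}}$ is nontrivial precisely when $p\mid u$, including the correct normalization of $\sqrt D$ and the ramified Euler factor in Leopoldt's formula.
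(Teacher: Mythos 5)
Your proof is essentially correct but takes a genuinely different route from the paper. The paper factors the statement through the notion of $p$-rationality: it quotes a result of Benmerieme--Movahhedi identifying $p$-rationality of $K$ with $L_p(1,\chi_D)$ being a $p$-adic unit, and then proves ``$p$-rational $\iff$ no dihedral tower'' by showing (for the easy direction) that $p$-rationality of a totally real field forces $M=K\Q_\infty$ to be abelian over $\Q$, and (for the hard direction) by extracting a $D_p$-quotient from any non-abelian $p$-extension in $M$ via a group-theoretic lemma of Mazur--Rubin applied to the conjugation action of $\Gal(K/\Q)$. You instead bypass $p$-rationality entirely and compute the minus eigenspace $X^-$ of $\Gal(M^{ab}/K)$ directly from the class-field-theoretic exact sequence $0\to\overline{\mathcal U}/\overline{\mathcal E}\to X\to A\to 0$, identifying dihedral towers with nonzero quotients of $X^-$ and then matching $X^-\neq 0$ against the $p$-adic class number formula. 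Your route is more self-contained and makes the arithmetic content ($p\mid h$ versus $p\mid u$) explicit, at the price of the bookkeeping you acknowledge; the paper's route is shorter but leans on two external inputs. One delicate point worth flagging: your claim that $(\overline{\mathcal U}/\overline{\mathcal E})^-\neq 0$ exactly when $p\mid u$ can fail when $p=3$ and the completion at the ramified prime equals $\Q_3(\zeta_3)$, since then $\mathcal U^-$ has $3$-torsion that makes the cokernel nontrivial regardless of $u$. This is precisely the trivial-zero situation the paper itself flags after Theorem \ref{thm: 1}, and it affects the paper's Theorem \ref{thm: p-adic unit} (on which both proofs rely) in the same way, so it is not a defect of your argument relative to the paper's; but if you want your eigenspace computation to stand on its own, you should either exclude that case or track the torsion contribution explicitly.
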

\begin{rmk}
    We note that if such a tower of extension exists, then $K$ is the unique quadratic subfield of $L$. In particular, the only rational primes that ramify in $L$ are those dividing the discriminant $D$ of $K$. Moreover, for all primes $\ell\neq p$ and $\ell \mid D$, the ramification index of $\ell$ in $L$ is $2$. 
\end{rmk}

In the process of this work, there were several natural questions that arose that seem deserving of further study:
 
\begin{quote}
    \textbf{Question 1.} For any real quadratic field $\Q(\sqrt{d})$ and prime divisor $p$ of $d$, does there exist a constant $\alpha$, independent of $d$ and $p$, such that $p^{\alpha}\nmid u$? The case of $\alpha =1$ is the CAACM and AACM conjecture. \\

    \textbf{Question 2.} Is the CAACM conjecture (unconditionally) false for infinitely many $d$? The question of whether there are infinitely many counterexamples to the CAACM conjecture does not seem to appear anywhere in the literature.\\

    \textbf{Question 3.} Can a precise asymptotic be obtained for the number of counterexamples to either the AACM or CAACM?\\

    \textbf{Question 4.} Can the $p$-adic methods used here be used to prove similar congruences for other totally real fields? Some results in this direction for pure cubic, quartic, and sextic fields can be found here in \cite{ito1984, kamei_1987}. 
\end{quote}

\section{$p$-adic preliminaries}
For a non-zero integer $n$, we define $v_p(n)$ to be the largest power of $p$ dividing $n$. If $n=0$, we set $v_p(0)=\infty$. We denote by $|\cdot|_p$ the standard $p$-adic metric on $\Q$. Thus if $x=a/b$, $a,b\in \Z$, $\gcd(a,b)=1$ and $b\neq 0$, then 
\[
|x|_p = p^{v_p(b)-v_p(a)}.
\]
We denote by $\Z_p$ and $\Q_p$ the completions of $\Z$ and $\Q$ (respectively) with respect to $|\cdot|_p$.  Taking the completion of the algebraic closure of $\Q_p$ we obtain $\C_p$, an algebraically closed complete metric space analogous to the complex numbers.  Here the metric is extended from $\Q_p$ to $\C_p$ and normalized so that $|p|_p =p^{-1}$. The non-zero elements of $\C_p$ can be decomposed as 
\[
\C_p^\times  = p^{\Q}\times W \times U_1
\]
where $p^\Q$ is the set $\{p^r : r\in \Q\}$, $W$ is the set of roots of unity with order coprime to $p$ and $U_1 = \{x\in \C_p : |x-1|_p<1\}$ (see for example  Proposition 5.4 of \cite{Washington} or section III.4 of \cite{Koblitz}. 

\subsection{The $p$-adic Logarithm}
One of the crucial tools in the work of Ankeny, Artin, and Chowla is the $p$-adic logarithm. 
\begin{defn}
    The \textit{$p$-adic logarithm }is defined by the formal power series
    \[
    -\log_p(1-x) = \sum_{n=1}^\infty \frac{x^n}{n}.
    \]
    Additionally, as a formal power series $\log_p$ also satisfies the functional equation
    \[
    \log_p(xy)=\log_p(x) + \log_p(y). 
    \]
\end{defn}
\noindent This series defining the $p$-adic logarithm can be easily see to converge $p$-adically for all $x\in U_1$. Moreover, $p$-adic logarithm can be (\textit{uniquely}) extended to all of $\C_p^\times$ by declaring that $\log_p(p)=\log_p(w)=0$ where $w$ is any root of unity. Further details can be found in \cite[Proposition 5.4]{Washington}. We record the following:
\begin{prop}
    Given $\alpha \in \C_p^\times$, we write $\alpha = p^rwx$ where $w\in W$ and $x\in U_1$. We set $\log_p(p)=\log_p(w)=0$. Then $\log_p$ defined above can be uniquely extended to all of $\C_p^\times$ by the rule 
    \begin{align*}
        \log_p: \C_p^\times &\to \C_p\\
        p^rwx &\mapsto \log_p x.
    \end{align*}
    Moreover, this extension satisfies the functional equation $\log_p(\alpha \beta) = \log_p(\alpha) + \log_p(\beta)$. 
\end{prop}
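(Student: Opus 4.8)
The plan is to exploit the direct product decomposition $\C_p^\times = p^\Q \times W \times U_1$ recorded above. Since this is an internal direct product, every $\alpha \in \C_p^\times$ has a unique representation $\alpha = p^r w x$ with $r \in \Q$, $w \in W$, and $x \in U_1$; this uniqueness is precisely what makes the rule $\alpha \mapsto \log_p(x)$ well-defined, so no choices need to be checked there. The content of the proposition then splits into two tasks: verifying the functional equation, which reduces to additivity of the power series on $U_1$, and verifying that the homomorphism property together with the normalization $\log_p(p) = \log_p(w) = 0$ forces the extension to be unique.

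For the functional equation, first I would check that $W$ and $U_1$ are subgroups so that the decomposition of a product can be computed factor by factor. That $W$ is closed under multiplication is immediate, and for $U_1$ one notes that $x \in U_1$ forces $|x|_p = 1$, whence $|xy-1|_p = |x(y-1)+(x-1)|_p \le \max(|y-1|_p, |x-1|_p) < 1$. Writing $\alpha = p^r w x$ and $\beta = p^s v y$, one has $\alpha\beta = p^{r+s}(wv)(xy)$ with $wv \in W$ and $xy \in U_1$, so by definition $\log_p(\alpha\beta) = \log_p(xy)$ while $\log_p(\alpha) + \log_p(\beta) = \log_p(x) + \log_p(y)$. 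Everything therefore comes down to the identity $\log_p(xy) = \log_p(x) + \log_p(y)$ for $x, y \in U_1$.

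To establish this last identity I would invoke the formal power series relation $\log\big((1+A)(1+B)\big) = \log(1+A) + \log(1+B)$ in $\Q[[A,B]]$ and then substitute $A = x-1$, $B = y-1$, which have $p$-adic absolute value $< 1$. The legitimacy of passing from a formal identity to a genuine equality of $p$-adic numbers is the main obstacle: one must know that each of the three series converges at the substituted values and that the double series obtained by expanding $\log\big(1 + (A+B+AB)\big)$ may be rearranged to match $\log(1+A) + \log(1+B)$. This is where the non-archimedean nature of $\C_p$ does the work — a series converges if and only if its terms tend to $0$, and convergent series in a complete non-archimedean field are unconditionally convergent, so the rearrangement is automatically valid once one checks that the general term $z^n/n$ satisfies $|z^n/n|_p = |z|_p^{\,n}\, p^{v_p(n)} \to 0$ for $|z|_p < 1$.

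Finally, for uniqueness I would suppose $L : \C_p^\times \to \C_p$ is any homomorphism agreeing with the power series $\log_p$ on $U_1$ and satisfying $L(p) = 0$ and $L(w) = 0$ for all $w \in W$. For $r = a/b \in \Q$ we get $b\,L(p^r) = a\,L(p) = 0$, and since the additive group of $\C_p$ is torsion-free this forces $L(p^r) = 0$; combined with $L(w) = 0$ and the homomorphism property, $L(p^r w x) = L(x) = \log_p(x)$, which is exactly our extension. Hence the extension is the only one compatible with the stated normalization, completing the proof.
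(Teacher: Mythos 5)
Your proposal is correct, and it actually supplies a proof that the paper itself omits: the paper simply records this proposition and defers to Washington's Proposition 5.4 for the details. Your argument is essentially the standard one found there — use the uniqueness of the decomposition $\C_p^\times = p^{\Q}\times W\times U_1$ to make the rule well-defined, reduce the functional equation to the identity $\log_p(xy)=\log_p(x)+\log_p(y)$ on $U_1$ via the subgroup properties of $W$ and $U_1$, transfer the formal identity $\log\bigl((1+A)(1+B)\bigr)=\log(1+A)+\log(1+B)$ to $\C_p$ using the fact that summable families in a complete non-archimedean field may be rearranged freely (with the correct estimate $|z^n/n|_p\le n|z|_p^n\to 0$), and derive uniqueness from torsion-freeness of the additive group of $\C_p$ applied to $b\,L(p^{a/b})=a\,L(p)=0$. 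The one point you take as given — that $p^{\Q}$ makes sense, i.e.\ that a coherent system of roots $p^{r}$ has been fixed so that the decomposition is a genuine internal direct product — is exactly what the paper records just before the proposition (citing Washington), so leaning on it is legitimate; it is worth being aware, though, that this choice is where the remaining content of Washington's Proposition 5.4 lies, and that $\log_p$ is independent of that choice precisely because you prove $L(p^r)=0$ for every rational $r$.
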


We can evaluate the $p$-adic logarithm of elements in $\Q(\sqrt{d})$ using the following identity in $\C_p$. 
\begin{prop}\label{prop: p-adic log of fund. unit}
    Fix an odd prime $p$. Suppose $d=pm$ is a positive squarefree integer. Moreover, suppose $\eps$ is the fundamental unit of $\Q(\sqrt{d})$. In $\C_p$ we have,  
    \[
    \frac{\log_p(\eps)}{\sqrt{d}} = \left(\frac{u}{t}\right)\sum_{n=0}^\infty \frac{d^{n}}{2n+1}\left(\frac{u}{t} \right)^{2n}.
    \]
\end{prop}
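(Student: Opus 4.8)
The plan is to reduce the evaluation of $\log_p(\eps)$ to that of $\log_p(\eps/\overline{\eps})$, where $\overline{\eps}$ denotes the Galois conjugate of $\eps$, and then to apply the defining power series of the $p$-adic logarithm. First I would use that $\eps$ is a unit, so that its norm $N(\eps) = \eps\,\overline{\eps} = \pm 1$ is a root of unity. By the normalization $\log_p(\pm 1)=0$ together with the functional equation for $\log_p$, this yields $\log_p(\overline{\eps}) = -\log_p(\eps)$, and hence
\[
2\log_p(\eps) = \log_p(\eps) - \log_p(\overline{\eps}) = \log_p\!\left(\frac{\eps}{\overline{\eps}}\right).
\]

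Next I would compute $\eps/\overline{\eps}$ explicitly. Writing $\eps = \tfrac{\delta}{2}(t + u\sqrt{d})$ and $\overline{\eps} = \tfrac{\delta}{2}(t - u\sqrt{d})$, the factor $\delta/2$ cancels, and setting $r := u\sqrt{d}/t$ we obtain $\eps/\overline{\eps} = (1+r)/(1-r)$.

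The key point requiring care is convergence, for which I must locate $r$ inside the region $U_1$. Fixing an embedding $\Q(\sqrt{d}) \into \C_p$ (equivalently, a choice of $\sqrt{d} \in \C_p$), I note that $d = pm$ is squarefree with $p \nmid m$, so $v_p(d) = 1$ and $v_p(\sqrt{d}) = \tfrac12$. Reducing the norm relation $\delta^2(t^2 - d u^2) = \pm 4$ modulo $p$ gives $\delta^2 t^2 \equiv \pm 4 \not\equiv 0 \Mod{p}$, so $p \nmid t$ and $v_p(t) = 0$. Therefore $v_p(r) = v_p(u) + \tfrac12 \geq \tfrac12 > 0$, whence $|r|_p < 1$ and $1 \pm r \in U_1$. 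On $U_1$ the extended logarithm agrees with the defining series $-\log_p(1-x) = \sum_{n\geq 1} x^n/n$, so both $\log_p(1+r)$ and $\log_p(1-r)$ may be computed termwise.

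Finally I would expand $\log_p(\eps/\overline{\eps}) = \log_p(1+r) - \log_p(1-r)$ using this series: the even-index terms cancel and the odd-index terms double, yielding $\log_p(\eps/\overline{\eps}) = 2\sum_{n\geq 0} r^{2n+1}/(2n+1)$. Combining with the identity $2\log_p(\eps) = \log_p(\eps/\overline{\eps})$ gives $\log_p(\eps) = \sum_{n\geq 0} r^{2n+1}/(2n+1)$; substituting $r = u\sqrt{d}/t$, using $(\sqrt{d})^{2n+1} = d^n\sqrt{d}$, and dividing through by $\sqrt{d}$ then produces the stated identity. The only genuine subtlety is the convergence and valuation bookkeeping of the previous paragraph — in particular the observation that $p$ divides $d$ exactly once while $p \nmid t$ — whereas the norm argument and the series manipulation are entirely formal.
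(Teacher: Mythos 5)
Your proof is correct, but it takes a different route from the paper's. The paper writes $\eps = \tfrac{t}{2}\bigl(1 + u\sqrt{d}/t\bigr)$, expands $\log_p\bigl(1+u\sqrt{d}/t\bigr)$ as a single series, splits it into odd- and even-indexed parts, and then observes that the even part resums to $-\tfrac12\log_p\bigl(1 - du^2/t^2\bigr) = -\tfrac12\log_p(\pm 4/t^2) = \log_p(t/2)$ via the norm equation, cancelling the leading $\log_p(t/2)$ term. You instead exploit the norm relation at the level of the logarithm itself: $N(\eps) = \eps\overline{\eps} = \pm 1$ gives $\log_p(\overline{\eps}) = -\log_p(\eps)$, so $2\log_p(\eps) = \log_p\bigl((1+r)/(1-r)\bigr)$ with $r = u\sqrt{d}/t$, and the even terms cancel between the two series rather than being resummed. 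The two arguments are cousins --- both ultimately rest on $p \nmid t$, on $\log_p(\pm 1) = \log_p(p) = 0$, and on the norm equation --- but yours is more symmetric and avoids the rearrangement of a single absolutely convergent series into two sub-series, at the cost of invoking the Galois conjugate and the multiplicativity of the norm. Your valuation bookkeeping ($v_p(\sqrt{d}) = \tfrac12$, $v_p(t) = 0$, hence $r \in U_1$) is the same convergence check the paper makes, stated more explicitly.
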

\begin{proof}
    We will prove this in the case that $\eps = \frac{1}{2} (t+u\sqrt{d})$ satisfies $t^2-du^2=\pm 4$. The case when $t^2-du^2=\pm 1$ follows an identical argument. We write
    \[
    \frac{\log_p(\eps)}{\sqrt{d}} = \frac{1}{\sqrt{d}}\left(\log_p\left( \frac{t}{2} \right) + \log_p\left(1 +\frac{u\sqrt{d}}{t} \right) \right). 
    \]
    Noting that as $p$ is odd, that $(p,t)=1$, we see that $|(u/t)\sqrt{d}|_p<1$ and we can expand the logarithm term as:
    \[
   \frac{1}{\sqrt{d}}\log_p\left(1 +\frac{u\sqrt{d}}{t} \right) = \sum_{n=1}^\infty \frac{(-1)^{n-1}}{n} \left(\frac{u}{t}\right)^n d^{(n-1)/2}.  
    \]
    The above series is absolutely convergent so  we split the sum into two as follows:
    \[
    \sum_{n=1}^\infty \frac{(-1)^{n-1}}{n} \left(\frac{u}{t}\right)^n d^{(n-1)/2} = \sum_{n=1}^\infty \frac{1}{2n-1}\left(\frac{u}{t} \right)^{2n-1}d^{n-1} - \frac{1}{2\sqrt{d}}\sum_{n=1}^\infty \frac{1}{n} \left(\frac{u}{t} \right)^{2n}d^{n}. 
    \]
    We see that the second sum is precisely:
    \[
    \sum_{n=1}^\infty \frac{1}{n} \left(\frac{du^2}{t^2} \right)^{n} = -\logp\left(1- \frac{du^2}{t^2}\right) = -\logp\left(\frac{t^2-du^2}{t^2}\right) = 2\log_p\left(\frac{t}{2} \right).
    \]
    Here, we have used the fact that $\logp(xy) = \logp(x)+\logp(y)$ and that $\logp(\pm 1) = 0$. Therefore, 
    \[
    \frac{1}{\sqrt{d}}\log_p\left(1 +\frac{u\sqrt{d}}{t} \right) = -\frac{\logp(t/2)}{\sqrt{d}}+\sum_{n=1}^\infty \frac{1}{2n-1}\left(\frac{u}{t} \right)^{2n-1}d^{n-1}. 
    \]
    The proposition follows upon changing the index of summation. 
\end{proof}

\subsection{The Teichm\"{u}ller Character}

We begin with a remark regarding Dirichlet characters in the $p$-adic context. Since a Dirichlet character $\chi$ is valued in the roots of unity and $0$, we must assign $p$-adic values to $\chi$ to discuss it in the context of this work. To achieve this, we choose an arbitrary but fixed embedding of $\overline{\Q}$ into $\C_p$ and thus consider $\chi(a)$ as an element in $\C_p$.   

We note that for odd primes $p$, the non-zero elements of $\Z_p$ are isomorphic (as abelian groups) to 
\[
\Z_p^\times  \cong \Z_p\cap (W \times U_1 ) \cong (\Z/(p-1)\Z) \times (1+\Z_p).
\]
In particular, given any $a\in \Z_p^\times$ we can write $a= \omega(a)\langle a\rangle$ where $\omega(a)$ is the $(p-1)$-st root of unity such that $\omega(a)\equiv a \Mod{p}$ and $\dia{a} \equiv 1\Mod{p}$. We call $\omega(a)$ the \textit{Teichm{\"u}ller} representative of the residue class $a \Mod{p}$. We define the Teichm{\"u}ller character in the following way:
\begin{defn}
    The map
    \begin{align*}
        (\Z/p\Z)^\times \to \Z_p\\
        a \mapsto \omega(a)
    \end{align*}
    is called the Teichm{\"u}ller character. If $x\in \Z_p$, we extend this function to $\Z_p$ by setting $\w(x) := \w(a)$ where $x\equiv a \Mod{p}$.  
\end{defn} 

Following the remark at the beginning of this section, it is plain that the Teichm{\"u}ller character is a Dirichlet character of conductor $p$. Moreover, for odd primes $p$, $\w^{\frac{p-1}{2}}$ is the Legendre symbol $\Mod{p}$. Indeed, if $a\in (\Z/p\Z)^\times$, then $\w(a)^{p-1}-1=0$. As $p-1$ is even, we have that:
\[
\w^{\frac{p-1}{2}}(a) = \pm 1.
\]
Furthermore, since $\w(a)\equiv a\Mod{p}$, raising both sides to the $\left(\frac{p-1}{2}\right)$-power we obtain:
\[
\w^{\frac{p-1}{2}}(a) \equiv a^{\frac{p-1}{2}} \Mod{p}.
\]
By Euler's criteria, this is $1\Mod{p}$ if $a$ is a non-zero quadratic residue $\Mod{p}$ and $-1 \Mod{p}$ if $a$ is a quadratic non-residue $\Mod{p}$. Therefore,
\[
\w^{\frac{p-1}{2}}(a) = \LegSym{a}{p}
\]
if $a\in (\Z/p\Z)^\times$. We can extend $\w$ to all of $\Z_p$ by defining $\w(0)=0$ and for any $x\in \Z_p$ with $x\equiv a\Mod{p}$ we have that $\w(x) = \w(a)$.

\subsection{The Kubota--Leopoldt $L$-function}

\begin{thm}[Washington \cite{washington1976}]\label{thm: p-adic L function}
Let $\chi$ be a Dirichlet character of conductor $f$ and let $F$ be any multiple of $p$ and $f$. Then there exists a $p$-adic meromorphic (analytic if $\chi \neq \chi_0$) function $L_p(s, \chi)$ on $\{s\in \C_p: |s|_p < p\cdot p^{-1/(p-1)}\}$ such that 
\[
L_p(1-n, \chi) = - ( 1 - \chi\omega^{-n}(p)p^{n-1}) \frac{B_{n, \chi\omega^{-n}}}{n}
\]
for $n\geq 1$. If $\chi$ is the trivial character, then $L_p(s, \chi)$ has a simple pole at $s=1$ with residue $1-p^{-1}$. Moreover, we have the formula
\[
L_p(s, \chi)= \frac{1}{F} \frac{1}{s-1} \sum_{\substack{a=1\\ (a, p)=1}}^F \chi(a)\langle a \rangle^{1-s}\sum_{j=0}^\infty \binom{1-s}{j} \left(\frac{F}{a} \right)^j B_j
\]
where $\binom{x}{n} = \frac{1}{n!}x( x-1) \cdots (x-n+1)$ for $n>0$ and $\binom{x}{0}=1$. 
\end{thm}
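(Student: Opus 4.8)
The plan is to take the displayed double sum as the \emph{definition} of $L_p(s,\chi)$ and then verify the three assertions in turn: analyticity (resp.\ meromorphy) on the disk $\mathcal{D}=\{s: |s|_p<p\cdot p^{-1/(p-1)}\}$, the interpolation formula at $s=1-n$, and the pole at $s=1$. Because a nonzero $p$-adic analytic function on a disk has isolated zeros, $L_p$ is determined by its values on the sequence $s=1-n$ (with $n\to\infty$ through a fixed residue class mod $p-1$, which accumulates at a point of $\mathcal{D}$); hence the interpolation property will simultaneously yield existence, uniqueness, and independence from the auxiliary modulus $F$.

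First I would dispose of the outer factor. For $(a,p)=1$ we have $\dia{a}\in 1+p\Z_p$, so $\log_p\dia{a}\in p\Z_p$ and $|\log_p\dia{a}|_p\le p^{-1}$; setting $\dia{a}^{1-s}:=\exp\!\big((1-s)\log_p\dia{a}\big)$, the $p$-adic exponential series converges precisely when $|(1-s)\log_p\dia{a}|_p<p^{-1/(p-1)}$, which on $\mathcal{D}$ follows from $|1-s|_p\le\max(1,|s|_p)<p\cdot p^{-1/(p-1)}$. Thus each $\dia{a}^{1-s}$ is analytic on $\mathcal{D}$, and the radius $p\cdot p^{-1/(p-1)}$ is forced exactly by the worst case $|\log_p\dia{a}|_p=p^{-1}$.

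The heart of the argument, and the step I expect to be most delicate, is the convergence and analyticity of the inner series $\sum_{j\ge 0}\binom{1-s}{j}(F/a)^jB_j$ uniformly on closed subdisks of $\mathcal{D}$. Here I would invoke the von Staudt--Clausen theorem in the form $v_p(B_j)\ge -1$, together with $v_p\!\big((F/a)^j\big)=j\,v_p(F)\ge j$ (as $p\mid F$ and $(a,p)=1$) and the estimate $v_p\binom{1-s}{j}\ge j\,v_p(1-s)-v_p(j!)$, where $v_p(j!)\le (j-1)/(p-1)$. On a closed subdisk $|s|_p\le r<p\cdot p^{-1/(p-1)}$ one has $v_p(1-s)\ge -\lambda$ for some $\lambda<1-\tfrac{1}{p-1}$, and combining the three bounds shows the $j$-th term has valuation at least $j\big(1-\tfrac{1}{p-1}-\lambda\big)+O(1)\to\infty$. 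The positivity of $1-\tfrac{1}{p-1}$ uses crucially that $p$ is odd, and the strict inequality $\lambda<1-\tfrac{1}{p-1}$ is precisely the statement that $s$ lies strictly inside $\mathcal{D}$. Since a uniformly convergent sum of analytic functions is analytic, $(s-1)L_p(s,\chi)$ is then analytic on $\mathcal{D}$, and the factor $1/(s-1)$ supplies the meromorphy; analyticity for $\chi\neq\chi_0$ will follow once the residue is shown to vanish.

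Finally I would compute the special values. At $s=1-n$ ($n\ge 1$) we have $1-s=n$, so $\binom{1-s}{j}=\binom{n}{j}$ truncates the series at $j=n$ and $\dia{a}^{1-s}=\dia{a}^{\,n}=a^n\w^{-n}(a)$; substituting and recognizing $\sum_{j=0}^{n}\binom{n}{j}B_j(a/F)^{\,n-j}=B_n(a/F)$, the $n$-th Bernoulli polynomial, collapses the inner sum to $F^nB_n(a/F)$ and gives
\[
L_p(1-n,\chi)=-\frac{F^{n-1}}{n}\sum_{a=1}^{F}(\chi\w^{-n})(a)\,B_n(a/F),
\]
the restriction $(a,p)=1$ being automatic since $\w^{-n}(a)=0$ when $p\mid a$. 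Two standard facts then finish the calculation: the distribution relation $B_n(x)=M^{n-1}\sum_{k=0}^{M-1}B_n\big(\tfrac{x+k}{M}\big)$ shows (via the $F_0$-periodicity of $\chi\w^{-n}$) that the right-hand side is unchanged when $F$ is replaced by $F_0=\lcm(f,p)$, whence the whole construction is independent of $F$; and the relation between the generalized Bernoulli number of an imprimitive character and that of its primitive associate contributes exactly the Euler factor $\big(1-\chi\w^{-n}(p)p^{n-1}\big)$ at $p$, which degenerates to $1$ precisely when $p$ divides the conductor of $\chi\w^{-n}$, since then $\chi\w^{-n}(p)=0$. Comparing with the defining expansion of $B_{n,\chi\w^{-n}}$ yields the claimed interpolation formula. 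For the pole I would set $\chi=\chi_0$ and compute $\lim_{s\to 1}(s-1)L_p(s,\chi_0)$: at $s=1$ one has $\dia{a}^{\,0}=1$ and $\binom{0}{j}=0$ for $j\ge 1$, so the inner sum is $B_0=1$ and the residue is $\tfrac{1}{F}\#\{1\le a\le F:(a,p)=1\}=1-p^{-1}$; the same computation for nontrivial $\chi$ gives residue $\tfrac{1}{F}\sum_{(a,p)=1}\chi(a)=0$, confirming analyticity away from the trivial character.
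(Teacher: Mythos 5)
This theorem is not proved in the paper at all: it is imported as a known result, attributed to Washington, and the paper's later arguments simply invoke it. So there is no internal proof to compare your attempt against; the right benchmark is Washington's own construction (the cited 1976 note, and Theorem 5.11 of his book), and your proposal is essentially a faithful reconstruction of exactly that argument: take the displayed double sum as the definition, prove analyticity on the disk from the valuation estimates $v_p(B_j)\ge -1$ (von Staudt--Clausen), $v_p\bigl((F/a)^j\bigr)\ge j$, and the binomial bound, then evaluate at $s=1-n$ via $\sum_{j=0}^n\binom{n}{j}B_jx^{n-j}=B_n(x)$ and the distribution relation, with the Euler factor coming from the comparison of restricted and full character sums. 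Two points deserve tightening. First, the inequality $v_p\binom{1-s}{j}\ge j\,v_p(1-s)-v_p(j!)$ is false when $v_p(1-s)>0$ (e.g.\ $1-s=p$, $j=2$); the uniform statement is $v_p\binom{1-s}{j}\ge j\min\bigl(v_p(1-s),0\bigr)-v_p(j!)$, which is what your final estimate actually uses, so the convergence argument survives intact. Second, the claim that ``the restriction $(a,p)=1$ is automatic since $\omega^{-n}(a)=0$ when $p\mid a$'' is correct only if $\chi\omega^{-n}$ is read as the pointwise product, i.e.\ as a (generally imprimitive) character modulo $\lcm(f,p)$; under the primitive reading used in the interpolation formula itself it fails --- for instance when $(p-1)\mid n$, the primitive $\chi\omega^{-n}$ equals $\chi$ and need not vanish at multiples of $p$ --- and the gap between the restricted and unrestricted sums is precisely the source of the factor $\bigl(1-\chi\omega^{-n}(p)p^{n-1}\bigr)$. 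Your subsequent appeal to the imprimitive-versus-primitive Bernoulli relation (after first shrinking $F$ to $\lcm(f,p)$, which correctly kills any spurious Euler factors at primes dividing $F$ but not $pf$) shows you are implicitly working with the first convention, under which the computation is coherent and yields the stated formula; just state that convention explicitly so the two halves of the argument do not appear to contradict each other.
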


The Kubota--Leopoldt $L$-function satisfies a rather remarkable congruence relation at the positive integers. Namely:
\begin{prop}\label{cor: equal mod p}
    Suppose $\chi \neq 1$ and $p^2\nmid f_\chi$. Let $m,n\in \Z$. Then 
    \[
    L_p(m,\chi) \equiv L_p(n, \chi) \Mod{p}.
    \]
\end{prop}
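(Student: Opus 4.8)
The plan is to work directly from Washington's explicit formula in Theorem~\ref{thm: p-adic L function} and reduce the claim to a valuation estimate. Since $L_p(s,\chi)$ is analytic for $\chi\neq 1$ on the disk $\{|s|_p<p\cdot p^{-1/(p-1)}\}$, which for odd $p$ has radius $>1$ and hence contains all of $\Z_p$, the assertion $L_p(m,\chi)\equiv L_p(n,\chi)\pmod p$ for all integers is equivalent to the statement that $L_p(\cdot,\chi)$ reduces to a \emph{constant} modulo $p$ on $\Z_p$. The hypothesis $p^2\nmid f_\chi$ enters immediately: it forces $v_p(f_\chi)\le 1$, so I may take $F=\lcm(p,f_\chi)$ with $v_p(F)=1$, which pins the normalizing factor $1/F$ at valuation exactly $-1$.

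With this $F$, set $T(s)=F(s-1)L_p(s,\chi)=\sum_{(a,p)=1}\chi(a)\langle a\rangle^{1-s}\sum_{j\ge 0}\binom{1-s}{j}(F/a)^jB_j$, the inner sum running over $1\le a\le F$. Two estimates do the bookkeeping. Since $\langle a\rangle\in 1+p\Z_p$ we have $\langle a\rangle^{1-s}\equiv 1\pmod p$ for every integer $s$; and by von Staudt--Clausen $v_p(B_j)\ge -1$, so $v_p((F/a)^jB_j)\ge j-1\ge 1$ for $j\ge 1$ (here $v_p(F)=1$ and $(a,p)=1$ give $v_p((F/a)^j)=j$), whence the inner sum is $\equiv B_0=1\pmod p$. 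Because $\sum_{(a,p)=1}\chi(a)=0$ for $\chi\neq1$ (check the cases $p\mid f$ and $p\nmid f$ separately), this already forces $T(s)\equiv 0\pmod p$. Pushing one $p$-adic digit further---expanding $\langle a\rangle^{1-s}\equiv 1+(1-s)\log_p\langle a\rangle\pmod{p^2}$ and retaining only $j=0,1$---the surviving constant term vanishes again by $\sum\chi(a)=0$, leaving $T(s)\equiv (1-s)\,pC\pmod{p^2}$ for a single constant $C\in\Z_p$ independent of $s$.

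Dividing by $F(s-1)$ then gives $L_p(s,\chi)\equiv -pC/F\pmod p$, a quantity independent of $s$, which is exactly the desired congruence. The main obstacle is precisely this final division: the factor $1/(F(s-1))$ has valuation $-1-v_p(s-1)$, so the argument needs the mod-$p^2$ refinement and, more seriously, breaks down for integers $s\equiv 1\pmod p$, where one cannot cancel $(s-1)$ from $T(s)\equiv(1-s)pC\pmod{p^2}$. I would handle this residue disk either by carrying the expansion to mod $p^3$, or---cleanest and uniform in $s$---by invoking the Iwasawa power series: for $\chi\neq1$ there is $g_\chi(X)\in\Z_p[[X]]$ with $L_p(s,\chi)=g_\chi((1+p)^s-1)$, and since $(1+p)^s-1\in p\Z_p$ for all $s\in\Z_p$ while $g_\chi$ has $p$-integral coefficients, $L_p(s,\chi)\equiv g_\chi(0)\pmod p$ simultaneously for every integer argument. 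This power-series argument also subsumes the case $p=3$, where von Staudt--Clausen is too weak to discard the $j=2$ term in the direct estimate.
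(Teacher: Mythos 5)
The paper never actually proves this proposition: it is imported as a known result (Corollary 5.13 in Washington, the same source as Theorem~\ref{thm: p-adic L function}), so there is no in-paper argument to compare against. Judged on its own, your proposal is correct, and the direct route you sketch first is exactly the standard one: expand Washington's formula, use $\langle a\rangle^{1-s}\in 1+p\Z_p$, von Staudt--Clausen, and $\sum\chi(a)=0$. You also correctly locate its two genuine weak points, namely the division by $F(s-1)$ on the residue disk $s\equiv 1\pmod{p}$ and the $j=2$ term for $p=3$; note that your first proposed repair (pushing to mod $p^3$) does not suffice uniformly, since $v_p(s-1)$ is unbounded as $s$ ranges over $\Z$, so the load-bearing part of your proof is really the second repair, the Iwasawa series: $L_p(s,\chi)=g_\chi((1+p)^s-1)$ with integral coefficients and $(1+p)^s-1\in p\Z_p$, which settles everything in one line and is consistent with the paper, which invokes precisely this description of $L_p$ in Section 5. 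Two small corrections. First, the bound ``$v_p((F/a)^jB_j)\ge j-1\ge 1$ for $j\ge 1$'' is off at $j=1$, where $j-1=0$; that term is still harmless, but only because $B_1=-\tfrac{1}{2}$ is a $p$-adic unit for odd $p$, so its valuation equals $v_p(F)=1$ --- the von Staudt--Clausen bound alone does not give this. Second, the hypothesis $p^2\nmid f_\chi$ is doing work in the Iwasawa-series route as well, not only in pinning $v_p(F)=1$: it forces $\chi$ to be of the first kind, which is what licenses the form $g_\chi((1+p)^s-1)$ with $g_\chi\in\Z_p[[T]]$ (more precisely, coefficients in the ring generated by the values of $\chi$); when $p^2\mid f_\chi$ the argument of the power series becomes $\zeta(1+p)^s-1$ for a nontrivial $p$-power root of unity $\zeta$, which is not in $p\Z_p$, and the one-line congruence breaks down. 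Stating that dependence explicitly would make the proof airtight.
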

The previous congruence is fundamental in proving our results. Indeed, it allows us to relate the value $L_p(1, \chi_D)$ to $L_p(1- \frac{p-1}{2}, \chi_D)$. To prove Theorem \ref{thm: 2}, we require the following congruence which is an easy consequence of Theorem \ref{thm: p-adic L function} and Proposition \ref{cor: equal mod p}.
\begin{cor}\label{cor: 3.11}
    If $\chi$ is a non-trivial even Dirichlet character of conductor $f$ and $F$ is any multiple of $p$ and $f$ then 
    \[
    L_p(1, \chi) \equiv \frac{-1}{F}\sum_{\substack{a=1\\p\nmid a}}^F \chi(a)\logp(a)\Mod{p}.
    \]
    Moreover, suppose $d>1$ is squarefree and $p>3$ is a prime such that $d=pm$. Let $\chi_D$ be the quadratic character of $\Q(\sqrt{d})$ and $\psi_m = \LegSym{\cdot}{p}\chi_D$. Then, 
    \[
    L_p(1 -\frac{p-1}{2}, \chi_D) \equiv 2B_{\frac{p-1}{2}, \psi_m} \equiv -\frac{1}{D}\sum_{\substack{a=1\\p\nmid a}}^D \chi(a)\logp(a) \Mod{p}.
    \]
\end{cor}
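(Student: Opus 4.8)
The plan is to prove the three asserted congruences in turn, reducing each to Washington's formula (Theorem~\ref{thm: p-adic L function}) and the congruence of Proposition~\ref{cor: equal mod p}. The first congruence of the second display is essentially immediate from the interpolation property. Taking $\chi=\chi_D$ and $n=\frac{p-1}{2}$ in $L_p(1-n,\chi)=-(1-\chi\w^{-n}(p)p^{n-1})B_{n,\chi\w^{-n}}/n$, I would use that $\w^{(p-1)/2}=\LegSym{\cdot}{p}$ is its own inverse to identify $\chi_D\w^{-(p-1)/2}=\LegSym{\cdot}{p}\chi_D=\psi_m$, and note that the Euler factor is exactly $1$ because $\psi_m(p)=\LegSym{p}{p}\chi_D(p)=0$. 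Hence $L_p(1-n,\chi_D)=-B_{n,\psi_m}/n$, and since $\frac1n=\frac{2}{p-1}\equiv-2\Mod p$ this gives $L_p(1-\tfrac{p-1}{2},\chi_D)\equiv 2B_{\frac{p-1}{2},\psi_m}\Mod p$; the quantity $B_{\frac{p-1}{2},\psi_m}$ is $p$-integral since $\psi_m$ has conductor $D/p$ prime to $p$ and $(p-1)\nmid\frac{p-1}{2}$.

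The core is the first display. I would obtain it by expanding Washington's formula about $s=1$. Writing $\dia{a}^{1-s}=1+(1-s)\log_p\dia{a}+O((1-s)^2)$ and using $\log_p\dia{a}=\log_p a$ (as $a=\w(a)\dia a$ with $\log_p\w(a)=0$), together with $\binom{1-s}{0}B_0=1$ and $\binom{1-s}{j}=\frac{(-1)^{j-1}}{j}(1-s)+O((1-s)^2)$ for $j\ge1$, the inner double series becomes
\[
1+(1-s)\left(\log_p a+\sum_{j\ge1}\frac{(-1)^{j-1}}{j}\Big(\frac Fa\Big)^j B_j\right)+O((1-s)^2).
\]
Since $\chi\neq1$ one has $\sum_{a=1,\,p\nmid a}^F\chi(a)=0$ (split off the terms with $p\mid a$ and use $f\mid F$), so the residue of the factor $1/(s-1)$ vanishes, the singularity is removable, and taking $s\to1$ leaves
\[
L_p(1,\chi)=-\frac1F\sum_{\substack{a=1\\p\nmid a}}^F\chi(a)\left(\log_p a+\sum_{j\ge1}\frac{(-1)^{j-1}}{j}\Big(\frac Fa\Big)^j B_j\right).
\]

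The main obstacle is showing that the correction sum $-\frac1F\sum_{p\nmid a}\chi(a)\sum_{j\ge1}\frac{(-1)^{j-1}}{j}(F/a)^jB_j$ is $\equiv0\Mod p$, and this is where evenness of $\chi$ and the restriction $p>3$ enter. Its $j$-th term is $c_j\sum_{p\nmid a}\chi(a)a^{-j}$ with $c_j=-\frac{(-1)^{j-1}}{j}B_jF^{j-1}$. The odd terms $j\ge3$ vanish because $B_j=0$. For even $j\ge2$, the divisibility $p\mid F$ together with the von Staudt--Clausen bound $v_p(B_j)\ge-1$ forces $v_p(c_j)\ge1$ once $p>3$, so these terms are killed by their coefficient. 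This leaves only $j=1$, where $c_1=-B_1=\frac12$ is a unit and the character sum itself must vanish: grouping $a$ according to its residues modulo $p$ and modulo the prime-to-$p$ part $f'$ of the conductor (writing $\chi=\chi_p\chi_{f'}$ and using $a^{-1}\equiv\w^{-1}(a)\Mod p$) yields $\sum_{p\nmid a}\chi(a)a^{-1}\equiv\big(\sum_r(\chi_p\w^{-1})(r)\big)\big(\sum_s\chi_{f'}(s)\big)\Mod p$. If $\chi_p\neq\w$ the first factor is $0$; if $\chi_p=\w$, then since $\chi$ is even the factor $\chi_{f'}$ must be odd, hence nontrivial, so the second factor is $0$. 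Either way the $j=1$ term vanishes, establishing the first display.

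Finally, the second congruence of the second display follows by chaining the pieces together. Since $D$ is a fundamental discriminant and $p$ is odd, $p^2\nmid D$, so Proposition~\ref{cor: equal mod p} applies to $\chi_D$ and gives $L_p(1-\tfrac{p-1}{2},\chi_D)\equiv L_p(1,\chi_D)\Mod p$. Combining this with the first congruence (already shown to equal $2B_{\frac{p-1}{2},\psi_m}$) and the first display specialized to $F=D$, which rewrites $L_p(1,\chi_D)$ as $-\frac1D\sum_{p\nmid a}\chi_D(a)\log_p a$, produces the chain $2B_{\frac{p-1}{2},\psi_m}\equiv L_p(1-\tfrac{p-1}{2},\chi_D)\equiv L_p(1,\chi_D)\equiv-\frac1D\sum_{p\nmid a}\chi_D(a)\log_p a\Mod p$, as claimed. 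I expect the delicate bookkeeping in the $j=1$ case (and the verification that every other $j$ is annihilated either by $B_j=0$ or by $v_p(c_j)\ge1$) to be the only genuinely subtle part of the argument.
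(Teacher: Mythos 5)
Your proposal is correct and is essentially the argument the paper leaves implicit (the paper gives no proof, asserting the corollary is "an easy consequence" of Theorem \ref{thm: p-adic L function} and Proposition \ref{cor: equal mod p}): expanding Washington's explicit formula to first order in $1-s$, observing that the pole term vanishes since $\sum_{p\nmid a}\chi(a)=0$, killing the Bernoulli correction terms ($B_j=0$ for odd $j\ge 3$, $v_p(c_j)\ge 1$ for even $j\ge 2$, and $\sum_{p\nmid a}\chi(a)a^{-1}\equiv\sum\chi\w^{-1}(a)\equiv 0$ since $\chi\w^{-1}$ is nontrivial as $\chi$ is even and $\w$ is odd), and then chaining with the interpolation formula and Proposition \ref{cor: equal mod p}. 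The one caveat worth recording is that your treatment of the $j=2$ term genuinely uses $p>3$ (for $p=3$ one has $v_3(B_2)=-1$ and the coefficient bound degrades), a hypothesis absent from the first display as stated but present everywhere the paper actually invokes it.
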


We conclude this section with the $p$-adic class number formula for real quadratic fields (see Theorem 5.24 of \cite{Washington} for a more general result). 
\begin{thm}\label{thm: p-adic class number formula}
    Suppose $K$ is a real quadratic field with discriminant $D$, fundamental unit $\eps$ and class number $h$. Then, 
    \[
    \frac{2h\log_p\eps}{\sqrt{D}} = \left(1 -\frac{\chi_D(p)}{p} \right)^{-1}L_p(1, \chi_D).
    \]
\end{thm}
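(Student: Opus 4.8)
The plan is to deduce this as the specialization to real quadratic fields of Leopoldt's $p$-adic class number formula (Theorem 5.24 of \cite{Washington}), which is the $p$-adic mirror of the classical analytic class number formula. For a totally real abelian field $K$ of degree $n$ with nontrivial rational characters $\chi$, class number $h$, discriminant $d_K$, and $p$-adic regulator $R_p(K)$, Leopoldt's formula takes the form
\[
\prod_{\chi\neq 1} L_p(1,\chi) = \frac{2^{n-1}\, h\, R_p(K)}{\sqrt{d_K}}\prod_{\chi\neq 1}\left(1-\frac{\chi(p)}{p}\right),
\]
the Euler factors appearing precisely because $L_p$ has the Euler factor at $p$ removed while $R_p(K)$ does not. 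For $K=\Q(\sqrt{D})$ there is exactly one nontrivial character, namely $\chi_D$, and $n=2$, so the first task is to identify each quantity in the rank-one setting.

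First I would check that the $p$-adic regulator of a real quadratic field is simply $R_p(K)=\log_p\eps$. Since the unit rank is $1$, the regulator is the $1\times 1$ determinant $\log_p(\sigma\eps)$ for one chosen real embedding $\sigma$; using the conjugate embedding together with $\eps\eps'=\pm 1$ and $\log_p(\pm1)=0$ confirms the two choices agree up to sign. With $d_K=D$ and $2^{n-1}=2$, substituting into Leopoldt's formula yields
\[
L_p(1,\chi_D) = \frac{2h\log_p\eps}{\sqrt{D}}\left(1-\frac{\chi_D(p)}{p}\right),
\]
and dividing by the (nonzero, since $|\chi_D(p)/p|_p<1$) Euler factor gives the stated identity. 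The bookkeeping I expect to matter is the choice of $\sqrt{D}\in\C_p$: the sign must be taken compatibly with the Gauss sum $\tau(\chi_D)$ under the fixed embedding $\overline\Q\hookrightarrow\C_p$, since the formula is sign-sensitive.

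For a self-contained argument avoiding the general machinery, I would instead run the classical Dirichlet derivation twice in parallel. The $p$-adic analogue of the Dirichlet $L(1,\chi)$ formula (Theorem 5.18 of \cite{Washington}) gives, for the even character $\chi_D$ of conductor $D$,
\[
L_p(1,\chi_D) = -\left(1-\frac{\chi_D(p)}{p}\right)\frac{\tau(\chi_D)}{D}\sum_{a=1}^{D-1}\chi_D(a)\log_p(1-\zeta_D^a),
\]
which differs from the complex formula only by replacing $\log$ with $\log_p$ and inserting the Euler factor. Separately, the classical class number formula for $\Q(\sqrt{D})$ is the logarithm of an \emph{algebraic} identity among cyclotomic units: $\prod_a(1-\zeta_D^a)^{-\chi_D(a)}$ equals $\eps^{2h}$ up to a root of unity in $\Q(\zeta_D)$. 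Applying $\log_p$ to this algebraic identity---and using that $\log_p$ annihilates roots of unity---gives $\sum_a\chi_D(a)\log_p(1-\zeta_D^a)=-2h\log_p\eps$, which substituted into the displayed $p$-adic formula reproduces the theorem.

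I expect the main obstacle to lie in this second route rather than the first: the classical class number formula only delivers an identity of Archimedean absolute values, so to legitimately apply $\log_p$ one must upgrade it to an identity of algebraic numbers up to roots of unity, which is exactly the index computation for the group of cyclotomic units inside the units of $\Q(\sqrt{D})$. Pinning down the correct power of $\eps$ and the ambiguous root of unity, together with the consistent choice of $\sqrt{D}=\tau(\chi_D)$ in $\C_p$, is the delicate point; it is precisely this content that Theorem 5.24 of \cite{Washington} packages once and for all, which is why the specialization route is the cleaner path to the stated result.
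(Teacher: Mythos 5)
Your proposal is correct and matches the paper's approach: the paper gives no independent proof of this statement, but simply cites Theorem 5.24 of \cite{Washington} (Leopoldt's $p$-adic class number formula), and your primary route---specializing that formula to $n=2$, identifying $R_p(K)=\log_p\eps$ up to sign, $d_K=D$, and the single nontrivial character $\chi_D$---is exactly that specialization, including the sign-fixing caveat that the paper records in its accompanying remark. Your alternative self-contained derivation via cyclotomic units goes beyond what the paper does, but as you note, the citation route is the cleaner path and is the one the paper takes.
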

\begin{rmk}
    Both $\log_p \eps$ and $\sqrt{D}$ are only determined up to sign. As such, we get the above by fixing signs so as to obtain equality. 
\end{rmk}

\section{Proof of Theorem \ref{thm: 1}}

\begin{proof}
Let $p$ be an odd prime and $d=pm$ be a squarefree positive integer. Let $K=\Q(\sqrt{d})$, $D=\operatorname{disc}(K)$, $\chi_D$ the quadratic character associated to $K$, and $\psi_m(a) = \LegSym{a}{p}\chi_D(a)$ be a Dirichlet character of conductor $D/p$. 

Fixing embeddings so as not to have any ambiguity in signs, the $p$-adic class number formula yields: 
\[
\frac{2h\logp \eps }{\sqrt{D}}=\left(1- \frac{\chi_D(p)}{p}\right)^{-1}L_p(1, \chi).  
\]
As $\chi$ is an even Dirichlet character of conductor $D$ and $p\mid D$, we have $\chi_D(p)=0$. Therefore, the above expression reduces to 
\[
\frac{2h\logp \eps}{\delta \sqrt{pm}} = L_p(1, \chi)
\]
where $\delta =1$ if $d\equiv 1 \Mod{4}$ and $\delta =2$ if $d\equiv 2,3\Mod{4}$. 

It is at this point that the proof of the two parts of Theorem 1.1 diverge. Assume that $p>3$.
By Proposition \ref{prop: p-adic log of fund. unit} we have that:
\[
\frac{1}{\sqrt{d}}\log_p \eps \equiv \frac{u}{t} \Mod{p}. 
\]

On the other hand,  Corollary \ref{cor: equal mod p} implies that 
\[
L_p(1, \chi_D) \equiv L_p\left(1 - \frac{p-1}{2}, \chi_D\right) \Mod{p}.
\]
We note that $\w^{\frac{p-1}{2}}(a) = \LegSym{a}{p}$ and hence $\chi_Dw^{\frac{p-1}{2}} = \psi_m$. 
Theorem \ref{thm: p-adic L function} implies that 
\[
L_p\left(1-\frac{p-1}{2}, \chi_D \right)  = -\left(1-\psi_m(p)p^{\frac{p-3}{2}}\right) \frac{B_{\frac{p-1}{2}, \psi_m}}{\frac{p-1}{2}}.
\]
By assumption, $p>3$, so $\frac{p-1}{2}>1$ and thus reducing $\Mod{p}$ we obtain:
\[
L_p(1, \chi_D) \equiv 2B_{\frac{p-1}{2}, \psi_m} \Mod{p}.
\]
Comparing the two congruences we have obtained for $L_p(1, \chi_D)$, we deduce:
\[
\frac{hu}{t} \equiv \delta B_{\frac{p-1}{2}, \psi_m} \Mod{p}. 
\]

Now we assume that $p=3$. By Proposition \ref{prop: p-adic log of fund. unit} we have:
\[
\frac{1}{\sqrt{3m}}\log_3 \eps \equiv \frac{u}{t} + m\left(\frac{u}{t}\right)^3 \Mod{3}.
\]
We note that $(t,3)=1$, so $(u/t)^3 \equiv (u/t) \Mod{3}$. Therefore, 
\[
\frac{1}{\sqrt{3m}}\log_3 \eps \equiv \left(1+m\right)\frac{u}{t}\Mod{3}. 
\]
By assumption $m\equiv 1\Mod{3}$ so 
\[
\frac{2h\log_3 \eps}{\delta\sqrt{3m}} \equiv \frac{hu}{\delta t} \Mod{3}. 
\] 
On the other hand,  Corollary \ref{cor: equal mod p} implies that 
\[
L_3(1, \chi_D) \equiv L_3(0, \chi_D) \Mod{3}.
\]
We note that $\w(a) = \LegSym{a}{3}$ and hence $\chi_D\w = \psi_m$.
Theorem \ref{thm: p-adic L function} implies that 
\[
L_3\left(0, \chi_D \right)  = -(1-\psi_m(3))B_{1, \psi_m}.
\]
Since $m\equiv 1 \Mod{3}$, $3$ does not split in $\Q(\sqrt{-m})$. As such, $\psi_m(3)=-1$. From this, we deduce that 
\[
\frac{hu}{t} \equiv \delta B_{1, \psi_m} \Mod{3}. 
\]
\end{proof}

\section{Proof of Theorem \ref{thm: 2}}
Let $\R_2 = \{a\in \R : r^2\equiv 1\Mod{D}\} \subset \R$ and $\No_2 = \{n\in \No: n^2 =1 \Mod{D}\}\subset \No$. 
Noting that the multiplicative inverses $\Mod{D}$ of elements in $\R$ (resp. $\No$) are also in $\R$ (resp. $\No$), we deduce that 
\[
R \equiv \prod_{r\in \R_2} r \Mod{D}
\]
and 
\[
N \equiv \prod_{n\in \No_2}n \Mod{D}.
\]
As $\chi_D$ is an even character, we can pair up $c$ and $-c$ in the above products to deduce that:
\[
R\equiv (-1)^{|\R_2|/2}\Mod{D}
\]
and 
\[
N\equiv (-1)^{|\No_2|/2}\Mod{D}
\]
where we use the convention that $|\emptyset|=0$. 
To determine the values of $R$ and $N$ $\Mod{D}$, it suffices to determine the possible sizes of $\R_2$ and $\No_2$.

\begin{lem} \label{lem: num order 2}
Suppose that $\rho$ is the number of solutions to $x^2\equiv 1\Mod{D}$.
If $\No_2\neq \emptyset$, then $|\R_2| = |\No_2| = \rho/2$. Otherwise, $|\R_2|=\rho$. 
\end{lem}
\begin{proof}
    Suppose $\No_2\neq \emptyset$ and let $n\in \No_2$. Then the map
    \begin{align*}
            f : \R_2 &\to \No_2\\
            r &\mapsto rn \Mod{D}.
    \end{align*}
   is a bijection of finite sets and hence $|\R_2|= |\No_2|$.
\end{proof}
\begin{lem}\label{lem: sign of A mod D}
   If $d=p\equiv 1\Mod{4}$ is prime, $R\equiv-N \equiv -1\Mod{p}$. Otherwise, $R\equiv N \Mod{D}$. 
\end{lem}
\begin{proof}
    If $\rho=2$, then $D=p\equiv 1 \Mod{4}$ is prime. In this case $x\equiv \pm 1\Mod{p}$ are both in $\R_2$ and hence $R\equiv -1 \Mod{p}$. Since $\No_2$ is empty, $N\equiv 1\Mod{p}$. If $\rho=4$, then either $|\R_2|=|\No_2|=2$ or $|\R_2|=4$. In the first case $R\equiv N\equiv -1\Mod{D}$. In the latter case, $R\equiv N\equiv 1\Mod{D}$. We note that if $\rho>4$, then $R\equiv N\equiv 1\Mod{D}$.
\end{proof}

The author would like to thank Lawrence Washington for suggesting a simplified proof of Lemma~\ref{lem: sign of A mod D} in the case $d=p\equiv 3\Mod{4}$ which ultimately lead to the above proof.  

\begin{proof}[Proof of Theorem \ref{thm: 2}]
We will prove the theorem in case (A). Suppose $R\equiv N\equiv 1\Mod{D}$ and write 
\[
R = 1 - D\Omega 
\]
and 
\[
N = 1 - D\Omega^*
\]
for some integers $\Omega$ and $\Omega^*$. As $p\mid D$, we can expand the $p$-adic logarithms of $R$ and $N$ using the power series definition to obtain the congruences:
\[
\log_p(R) \equiv -D\Omega \Mod{p^2}
\]
and 
\[
\log_p(N)\equiv -D \Omega^* \Mod{p^2}. 
\]
Therefore, 
\[
\log_p(R)-\log_p(N) \equiv -D(\Omega -\Omega^*)\Mod{p^2}. 
\]
In particular, 
\[
-\frac{1}{D}\left(\log_p(R) -\log_p(N)\right) \equiv \Omega-\Omega^* \Mod{p}.
\]
Noting that $\frac{N-R}{D}= (\Omega-\Omega^*)$ we obtain
\[
-\frac{1}{D}\left(\log_p(R) -\log_p(N)\right) \equiv \frac{N-R}{D} \Mod{p}.
\]
Moreover, by Corollary \ref{cor: 3.11} the left hand side is precisely $L_p(1, \chi_D)\Mod{p}$, which is congruent to $2hu/\delta t\Mod{p}$ as seen in the proof of Theorem \ref{thm: 1}. Recalling that $D=\delta^2d$, we deduce: 
\[
\frac{2\delta hu}{t}\equiv \frac{N-R}{d} \Mod{p}. 
\]

To deduce (B) and (C) we proceed as above with the following changes:
If $R\equiv N\equiv -1\Mod{D}$, then we write
\[
R = -1 + D\Omega
\]
and 
\[
N =-1+D\Omega^*.
\]
If $R\equiv -1\Mod{p}$ and $B\equiv 1\Mod{p}$, then we write
\[
R = -1 +p\Omega
\]
and 
\[
N =1-p\Omega^*.
\]

\end{proof}
\section{Counterexamples to the Ankeny--Artin--Chowla conjecture}
As mentioned in the introduction, there are a total of $21$ known squarefree composite integers $d$ that are counterexamples to the CAACM conjecture and two counterexamples to the AACM conjecture. In this section we relate these conjectures to the zeros of $p$-adic $L$-functions.

By Iwasawa's construction of the $p$-adic $L$-function, there is a function $g_\chi \in \Z_p[[T]$ which we write as 
\[
g_\chi (T) = b_0 + b_1T +b_2T^2 + \cdots. 
\]
We call this the Iwasawa series of $\chi$. In particular, Iwasawa showed that 
\[
L_p(s, \chi) = g_\chi( (1+p)^s -1)
\]

\noindent By the $p$-adic Weierstrass preparation theorem (\cite{Washington} Theorem 7.3) we can write $g_\chi$ as 
\[
g_\chi(T) = p^\mu P(T)U(T)
\]
where $\mu\geq 0$ is an integer, $P(t)$ is of the form 
\[
P(T) = T^\lambda + c_{n-1}T^{\lambda -1} + \cdots + c_0
\]
where $p\mid c_i$ for each $0\leq i \leq \lambda-1$, and $U(T)$ is a unit in $\Z_p[[T]]$. We denote by $\lambda_p(\chi)$ the degree of $P(T)$ and call this the Iwasawa $\lambda$-invariant of $\chi$. As a consequence of a deep theorem of Ferrero and Washington \cite{FW1979}, we have that $\mu=0$ . 

It is known that in the domain of definition of $L_p(s, \chi)$, that $U((1+p)^s-1)$ has no zeros. As such, the zeros of $L_p(s, \chi)$ arise from the zeroes of $P(T)$ (see for example, \cite{Ernvall92} for the details). It should be noted that $P(T)$ can have zeros that are outside of the domain of $L_p(s, \chi)$. For an interpretation of these zeros, see \cite{Childress87}.

An elementary exercise in ring theory states that if $R$ is a ring (with an identity element), then $f\in R[[x]]$ is a unit if and only if $f(0)\in R^\times$ (see, for example, Exercise 7.1 \cite{Washington}). In particular, as $U(T)$ is a unit, we find that $U(0)$ is a unit in $\Z_p$. From this we see that 
\[
g_\chi(0) \in \begin{cases}
    \Z_p^\times & \text{if $\lambda_p(\chi) = 0$}\\
    p\Z_p &\text{if $\lambda_p(\chi) >0$}
\end{cases}
\]
In particular, we see that $\lambda_p(\chi)>0$ if and only if $\deg(P) >0$.

Returning to $p$-adic $L$-functions we see that 
\[
L_p(0, \chi) = g_\chi(0).
\]
Since $L_p(1, \chi_D) \equiv L_p(0, \chi_D) \Mod{p}$ by Corollary \ref{cor: equal mod p}, we deduce Theorem \ref{thm: p-adic unit} from the introduction.

\section{Dihedral extensions of $\Q$}
We now elaborate on a recent theme explored by \cite{Benmerieme2024} and \cite{Cohen2020} relating counterexamples to the AACM conjecture to the existence of certain dihedral extensions of $\Q$. We begin by introducing some notation. Suppose $p$ is a rational prime, $K$ is a number field, and $\Sigma$ is the set of primes lying above $p$ in $K$. We set 
\begin{align*}
   M &:= \text{ compositium of all finite $p$-extensions of $K$ that are unramified outside of $\Sigma$}, \\
   \Gamma &:= \Gal(M/K), \\
   M^{ab} &:= \text{ the maximal abelian extension of $K$ in $M$},\\
   \Gamma^{ab}&:= \Gal(M^{ab}/K).
\end{align*}
It is known that $\Gamma^{ab}$ is finitely generated as a $\Z_p$-module (see for example Chapter 13 section 5 of \cite{Washington}). Denoting by $r_2(K)$ the number of complex embeddings of $K$, we say $K$ is $p$-rational if 
\begin{enumerate}[label= (\arabic*)]
    \item $\operatorname{Rank}_{\Z_p}(\Gamma^{ab}) = r_2(K)+1$.
    \item $\Gamma^{ab}$ is torsion free as a $\Z_p$-module. 
\end{enumerate}

\noindent Condition (1) is Leopoldt's conjecture for the prime $p$ and field $K$. In particular, (1) is always satisfied for abelian extensions of $\Q$ (see Theorem 5.25 of \cite{Washington}). As explained in \cite{Greenberg2016}, if $K$ is a totally real Galois extension of $\Q$, $K$ is $p$-rational if and only if $M^{ab} = K\Q_{\infty}$, the cyclotomic $\Z_p$-extension of $K$. Moreover, if $K$ is totally real, $\Gamma = \Gamma^{ab} \cong \Z_p$ and hence $M=K\Q_{\infty}$. We record this as a lemma for use later.
\begin{lem}\label{lem: Zp - p-rational}
    A totally real field $K$ is $p$-rational if and only if the maximal extension of $K$ that is unramified outside the primes above $p$, is the cyclotomic $\Z_p$-extension of $K$. 
\end{lem}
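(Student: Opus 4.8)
The plan is to unwind the definition of $p$-rationality and to compare the maximal pro-$p$ extension $M$ of $K$ unramified outside $p$ with the cyclotomic $\Z_p$-extension $K\Q_\infty$, which always sits inside $M$ since it is a pro-$p$ extension ramified only at $p$. Throughout, $K$ is totally real, so $r_2(K)=0$ and condition (1) in the definition reads $\operatorname{Rank}_{\Z_p}(\Gamma^{ab})=1$; recall also that $\Gamma^{ab}$ is finitely generated over $\Z_p$. I will prove the two implications separately, the forward one carrying the real content.

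For the forward direction, suppose $K$ is $p$-rational. Then $\Gamma^{ab}$ is a finitely generated, torsion-free $\Z_p$-module of rank $1$, so by the structure theorem $\Gamma^{ab}\cong\Z_p$; in particular it is topologically generated by a single element. The key step is to transfer this from $\Gamma^{ab}$ to $\Gamma$ itself: by the Frattini (Burnside basis) theorem for pro-$p$ groups, the minimal number of topological generators of $\Gamma$ equals $\dim_{\F_p}\Gamma^{ab}/p\,\Gamma^{ab}=1$, so $\Gamma$ is pro-cyclic, hence abelian. Therefore $M=M^{ab}$ and $\Gamma=\Gamma^{ab}\cong\Z_p$, i.e. $M/K$ is a $\Z_p$-extension unramified outside $p$. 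Since $K\Q_\infty\subseteq M$, restriction yields a continuous surjection $\Z_p\cong\Gamma\twoheadrightarrow\Gal(K\Q_\infty/K)\cong\Z_p$; as every continuous surjective endomorphism of $\Z_p$ is an isomorphism, the inclusion $K\Q_\infty\subseteq M$ is an equality.

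For the converse, suppose $M=K\Q_\infty$. Then $\Gamma=\Gal(M/K)=\Gal(K\Q_\infty/K)\cong\Z_p$ is abelian, so $\Gamma^{ab}=\Gamma\cong\Z_p$. This module is torsion-free and has $\Z_p$-rank $1=r_2(K)+1$, so both defining conditions of $p$-rationality hold and $K$ is $p$-rational.

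The main obstacle is the passage from $\Gamma^{ab}$ to $\Gamma$ in the forward direction, i.e. showing that a single topological generator of $\Gamma^{ab}$ forces $M=M^{ab}$; this is exactly the Frattini argument, and it is what upgrades the abelian statement ``$M^{ab}=K\Q_\infty$'' recorded in \cite{Greenberg2016} to the full statement about $M$. A secondary point to flag is that ``the maximal extension unramified outside the primes above $p$'' must be read as the maximal \emph{pro-$p$} such extension (namely $M$), since otherwise tame extensions of degree prime to $p$ would already obstruct equality with a $\Z_p$-extension. One could alternatively cite the equivalence ``$K$ $p$-rational $\iff M^{ab}=K\Q_\infty$'' from \cite{Greenberg2016} directly and then invoke only the Frattini step to conclude.
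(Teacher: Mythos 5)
Your proof is correct and follows the same route the paper takes: the paper simply cites \cite{Greenberg2016} for the equivalence with $M^{ab}=K\Q_\infty$ and asserts that $\Gamma=\Gamma^{ab}\cong\Z_p$ for totally real $K$, whereas you actually supply the missing step via the Burnside basis (Frattini) theorem for the pro-$p$ group $\Gamma$. Your cautionary remarks --- that $r_2(K)=0$ so the rank condition reads $\operatorname{Rank}_{\Z_p}(\Gamma^{ab})=1$, and that ``maximal extension unramified outside $p$'' must be read as the maximal pro-$p$ such extension $M$ --- are both apt and consistent with the paper's setup.
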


The connection between $p$-rationality of real quadratic fields and $p$-adic $L$-functions comes from the following:
\begin{thm}[Corollary 2.2 of \cite{Benmerieme2021}]
Suppose $K=\Q(\sqrt{d})$ for a squarefree positive integer $d$ and that $D$ is the discriminant of $K$. Then $K$ is $p$-rational if and only if $L_p(1, \chi_D)$ is a $p$-adic unit.    
\end{thm}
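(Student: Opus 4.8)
The plan is to compare two descriptions of the torsion submodule of $\Gamma^{ab}$: a class field theoretic one and an analytic one coming from the $p$-adic class number formula. First I would record that, since $K=\Q(\sqrt d)$ is totally real, $r_2(K)=0$, so condition (1) in the definition of $p$-rationality asks that $\operatorname{Rank}_{\Z_p}(\Gamma^{ab})=1$; because $K/\Q$ is abelian, Leopoldt's conjecture holds for $(K,p)$ and this rank condition is automatic (Theorem 5.25 of \cite{Washington}). Hence $K$ is $p$-rational if and only if $\Gamma^{ab}$ is torsion free, i.e.\ $\Gamma^{ab}\cong\Z_p$, which by Lemma \ref{lem: Zp - p-rational} is exactly the assertion $M^{ab}=K\Q_\infty$. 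The theorem is thereby reduced to detecting torsion in $\Gamma^{ab}$.

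Next I would make that torsion explicit through global class field theory (Chapter 13 of \cite{Washington}). Writing $\mathcal U=\prod_{\mathfrak p\mid p}U^{(1)}_{\mathfrak p}$ for the product of principal local units and $\overline E$ for the closure of the diagonal image of $\O_K^\times$ in $\mathcal U$, one has an exact sequence
\[
1\longrightarrow \mathcal U/\overline E \longrightarrow \Gamma^{ab}\longrightarrow A_K\longrightarrow 1,
\]
where $A_K$ is the $p$-Sylow subgroup of the ideal class group of $K$. Since $\sum_{\mathfrak p\mid p}[K_{\mathfrak p}:\Q_p]=2$, the module $\mathcal U$ is free of $\Z_p$-rank $2$, and by Leopoldt $\overline E\cong\Z_p$ is the free rank-$1$ submodule spanned by (the image of) the fundamental unit $\eps$. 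Thus $\mathcal U/\overline E$ has rank $1$, and its torsion is the finite group measuring the index of $\overline E$ inside a maximal $\Z_p$-free submodule. Reading $\overline E$ in logarithmic coordinates via $\logp$ identifies the order of this torsion with a power of $p$ whose exponent is controlled by $v_p(\logp\eps)$, while $\#A_K=p^{v_p(h)}$. Consequently $\Gamma^{ab}$ is torsion free if and only if a single $p$-adic valuation, assembled from $h$, $\logp\eps$ and $\sqrt D$, vanishes.

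To finish I would identify that valuation with $v_p\!\left(L_p(1,\chi_D)\right)$ through the $p$-adic class number formula (Theorem \ref{thm: p-adic class number formula}),
\[
\frac{2h\,\logp\eps}{\sqrt D}=\left(1-\frac{\chi_D(p)}{p}\right)^{-1}L_p(1,\chi_D).
\]
When $p\mid D$ the Euler factor is $1$, so $L_p(1,\chi_D)$ and $\tfrac{2h\logp\eps}{\sqrt D}$ have equal valuation and the equivalence is immediate. When $p$ is unramified the Euler factor contributes $v_p\!\left(1-\chi_D(p)/p\right)=-1$; the compensating fact is that here $\langle\eps\rangle\in 1+p\O_{K_{\mathfrak p}}$ forces $v_p(\logp\eps)\ge 1$, via the decomposition $\C_p^\times=p^{\Q}\times W\times U_1$, so the torsion exponent computed in the previous step carries an extra $-1$ that cancels the Euler factor exactly. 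In every case one obtains ``$\Gamma^{ab}$ torsion free'' $\iff v_p(L_p(1,\chi_D))=0$, which is the asserted equivalence.

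The hard part is the middle step: converting the class field theoretic exact sequence into a precise $p$-adic valuation for $\#(\Gamma^{ab})_{\mathrm{tors}}$ and checking that the local contributions at split, inert and ramified primes above $p$ match the Euler factor $(1-\chi_D(p)/p)$ produced by the analytic formula. This is where the normalization of the $p$-adic regulator and the structure of the local unit groups $U^{(1)}_{\mathfrak p}$ must be tracked carefully; once that bookkeeping is complete, the two descriptions of the torsion coincide and the theorem follows.
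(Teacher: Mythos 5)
The paper does not actually prove this statement: it is imported wholesale as Corollary~2.2 of \cite{Benmerieme2021}, so there is no internal proof to compare against and your proposal must stand on its own. Its overall frame is the right one, and is indeed the strategy of the cited source: use Leopoldt for abelian fields to reduce $p$-rationality to torsion-freeness of $\Gamma^{ab}$, describe $\Gamma^{ab}$ by the class-field-theoretic exact sequence $1 \to \mathcal{U}/\overline{E} \to \Gamma^{ab} \to A_K \to 1$, and translate into $L_p(1,\chi_D)$ via the $p$-adic class number formula (Theorem~\ref{thm: p-adic class number formula}).

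However, the argument as written has a genuine gap, which you yourself flag as ``the hard part'': the middle step is asserted rather than carried out, and it is not routine bookkeeping. Three concrete issues. First, torsion-freeness of the middle term of an extension of $\Z_p$-modules does not reduce formally to properties of the two ends: the sequence $0 \to \Z_p \xrightarrow{\,p\,} \Z_p \to \Z/p\Z \to 0$ has torsion-free middle term and non-trivial quotient, so you cannot pass from your exact sequence to ``$\#(\Gamma^{ab})_{\mathrm{tors}}$ equals a single valuation assembled from $h$, $\log_p\eps$, $\sqrt{D}$'' without an extra input. What rescues multiplicativity here is that $K\Q_\infty/K$ is totally ramified at every prime above $p$ (for any layer, the inertia degree over $K_{\mathfrak{p}}$ divides a power of the odd prime $p$ and is at most $2$, since $e \ge p^n$ forces $f \le 2$), so the $p$-Hilbert class field meets $K\Q_\infty$ in $K$; this fact is exactly what makes the orders of $A_K$ and $(\mathcal{U}/\overline{E})_{\mathrm{tors}}$ multiply, and it is nowhere in your argument. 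Second, the claim that $\mathcal{U}$ is a free $\Z_p$-module of rank $2$ fails for $p=3$ when $\mu_3 \subset K_{\mathfrak{p}}$ (e.g.\ $3 \mid d$ with $K_{\mathfrak{p}} = \Q_3(\sqrt{-3})$), which is precisely a case where local torsion enters the count and the statement needs care. Third, calling the ramified case ``immediate'' inverts the difficulty: when $p \mid D$ the valuations of $\log_p\eps$ and $\sqrt{D}$ are half-integers, and matching $v_p\bigl(\#(\mathcal{U}/\overline{E})_{\mathrm{tors}}\bigr)$ --- the index of $\log_p\overline{E}$ in its saturation inside $\log_p\mathcal{U}$ --- against $v_p\bigl(2h\log_p\eps/\sqrt{D}\bigr)$ is where the real computation sits, just as the unit-normalization you describe is needed in the unramified cases. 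Until these computations are done, the equivalence with $v_p(L_p(1,\chi_D))=0$ is a plausible and well-aimed plan, not a proof.
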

By Theorem \ref{thm: p-adic unit} we deduce:
\begin{cor}\label{cor: p-rational - AAC}
    The CAACM conjecture is true for $K=\Q(\sqrt{d})$ if and only if for some prime divisor $p\mid d$, $K$ is $p$-rational. 
\end{cor}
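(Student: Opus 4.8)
The plan is to deduce this statement directly from Theorem \ref{thm: p-adic unit} together with the $p$-rationality criterion recorded just above it, namely that $K=\Q(\sqrt d)$ is $p$-rational if and only if $L_p(1,\chi_D)$ is a $p$-adic unit (Corollary 2.2 of \cite{Benmerieme2021}). Both inputs are equivalences stated one prime at a time, so the whole argument amounts to carrying a single existential quantifier through two known characterizations; I therefore expect the proof to be short, with the only real care needed in the bookkeeping of quantifiers and of the even prime.

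First I would unwind what ``the CAACM conjecture is true for $K$'' means for a fixed squarefree $d$: it asserts $d\nmid u$. Because $d$ is squarefree, $d\nmid u$ holds if and only if there exists an odd prime divisor $p\mid d$ with $p\nmid u$. This is where the existential quantifier in the statement originates, and it is crucial to keep it existential rather than universal: failure of the conjecture, $d\mid u$, is equivalent to $p\mid u$ for \emph{every} prime $p\mid d$, so $K$ is $p$-rational for \emph{no} such $p$.

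Next I would translate $p\nmid u$ into the language of $p$-adic $L$-functions using the machinery already assembled. For an odd prime $p\mid d$ the prime ramifies in $K$, so $\chi_D(p)=0$ and the $p$-adic class number formula (Theorem \ref{thm: p-adic class number formula}) collapses to $L_p(1,\chi_D)=\tfrac{2h}{\delta}\cdot\tfrac{\log_p\eps}{\sqrt d}$. By Proposition \ref{prop: p-adic log of fund. unit} the factor $\tfrac{\log_p\eps}{\sqrt d}$ has leading term $u/t$ with $t$ a $p$-adic unit, so its valuation equals $v_p(u)$; hence $L_p(1,\chi_D)$ is a $p$-adic unit precisely when $p\nmid u$, which is exactly the non-vanishing recorded in Theorem \ref{thm: p-adic unit}. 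Combining this with the $p$-rationality criterion gives, for each odd $p\mid d$, the chain $p\nmid u \iff L_p(1,\chi_D)\in\Z_p^\times \iff K \text{ is } p\text{-rational}$. Feeding this equivalence into the existential statement of the previous paragraph yields exactly Corollary \ref{cor: p-rational - AAC}.

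The step I would be most careful about — and the only genuine subtlety — is the interference of the class number in the valuation $v_p(L_p(1,\chi_D))=v_p(h)+v_p(u)$: a priori $L_p(1,\chi_D)$ could fail to be a unit because $p\mid h$ even when $p\nmid u$. To keep the equivalence $p\nmid u \iff K$ is $p$-rational exact I would invoke the class number bound guaranteeing $p\nmid h$ for the relevant primes (as in the prime case $h<p$ cited in the introduction), or simply absorb this into the black-box statement of Theorem \ref{thm: p-adic unit}, whose proof already accounts for it. I would also restrict throughout to odd prime divisors, since both the $p$-adic $L$-function construction and the cited $p$-rationality criterion are formulated only for odd $p$.
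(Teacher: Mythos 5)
Your proposal is correct and follows essentially the same route as the paper: Corollary \ref{cor: p-rational - AAC} is obtained there in one line by combining Theorem \ref{thm: p-adic unit} with the $p$-rationality criterion of Benmerieme--Movahhedi, and the quantifier bookkeeping and the $v_p(h)$ caveat you discuss are exactly what the paper absorbs into the black-box statement of Theorem \ref{thm: p-adic unit}. The only difference is that you re-derive part of that theorem's content (the chain $p\nmid u \iff L_p(1,\chi_D)\in\Z_p^\times$ via the $p$-adic class number formula), which the paper simply cites.
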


We will call a Galois extension $L/\Q$ a $D_n$-extension if $\Gal(L/\Q) \cong D_n$, the dihedral group of order $2n$. We related the $p$-rationality of of real quadratic fields to the existence of certain dihedral extensions of $\Q$.

\begin{thm}
    Let $K=\Q(\sqrt{d})$ be a real quadratic field and $p>2$ be a prime divisor of $d$. Then $K$ is $p$-rational if and only if there does not exist a tower of extensions $L\supset K \supset \Q$ such that 
    \begin{enumerate}[label=(\roman*)]
        \item $\Gal(L/\Q) \cong D_p$
        \item $L/K$ is unramified outside of the primes lying above $p$ in $K$.
    \end{enumerate}
\end{thm}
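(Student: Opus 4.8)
The plan is to study $\Gamma^{\mathrm{ab}} = \Gal(M^{\mathrm{ab}}/K)$ as a module over $\Z_p[\Gal(K/\Q)]$ and to detect dihedral towers through the action of the nontrivial automorphism $\sigma$ of $K/\Q$. Since $\Sigma$ is stable under $\sigma$, the fields $M$ and $M^{\mathrm{ab}}$ are Galois over $\Q$, so $\sigma$ acts on $\Gamma^{\mathrm{ab}}$ by lifting and conjugating; as $p$ is odd, $2$ is invertible in $\Z_p$ and the idempotents $\tfrac{1}{2}(1\pm\sigma)$ split $\Gamma^{\mathrm{ab}} = (\Gamma^{\mathrm{ab}})^{+}\oplus(\Gamma^{\mathrm{ab}})^{-}$ into its $(\pm1)$-eigenspaces. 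The first step is the dictionary: a tower $L\supset K\supset\Q$ with $\Gal(L/\Q)\cong D_p$ and $L/K$ unramified outside $\Sigma$ is the same datum as a cyclic degree-$p$ extension $L/K$ unramified outside $\Sigma$ on which $\sigma$ acts by inversion. Indeed, since $p$ is odd $D_p$ has a unique index-two subgroup, so $K$ must be its quadratic subfield and $\Gal(L/K)\cong\Z/p\Z$; the requirement that the (automatically split, by Schur--Zassenhaus) extension $1\to\Z/p\Z\to\Gal(L/\Q)\to\Gal(K/\Q)\to1$ be nonabelian is exactly that $\sigma$ act by inversion. Because $L/K$ is abelian and a $p$-extension unramified outside $\Sigma$ we have $L\subseteq M^{\mathrm{ab}}$, so such $L$ correspond to continuous surjections $\phi\colon\Gamma^{\mathrm{ab}}\to\Z/p\Z$ with $\phi\circ\sigma=-\phi$. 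This last condition forces $\phi$ to vanish on $(\Gamma^{\mathrm{ab}})^{+}$ and to factor through $(\Gamma^{\mathrm{ab}})^{-}$, so a dihedral tower exists if and only if $(\Gamma^{\mathrm{ab}})^{-}$ admits a surjection onto $\Z/p\Z$; by Nakayama's lemma this holds if and only if $(\Gamma^{\mathrm{ab}})^{-}\neq0$.

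The forward implication is then immediate from Lemma \ref{lem: Zp - p-rational}. If $K$ is $p$-rational then $M=K\Q_{\infty}$, which is abelian over $\Q$, so every subextension that is Galois over $\Q$ has abelian Galois group and none can be isomorphic to the nonabelian group $D_p$; equivalently $(\Gamma^{\mathrm{ab}})^{-}=0$ and no tower exists.

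For the converse I must show that if $K$ is not $p$-rational then $(\Gamma^{\mathrm{ab}})^{-}\neq0$. Since Leopoldt's conjecture holds for abelian fields, $\Gamma^{\mathrm{ab}}$ always has $\Z_p$-rank $r_2(K)+1=1$, so $p$-rationality is equivalent to $\Gamma^{\mathrm{ab}}$ being torsion free, i.e. $\Gamma^{\mathrm{ab}}\cong\Z_p$. I would first locate the free part: the cyclotomic $\Z_p$-extension $K\Q_{\infty}/K$ is abelian over $\Q$ and yields a $\sigma$-equivariant surjection $\Gamma^{\mathrm{ab}}\twoheadrightarrow\Gal(K\Q_{\infty}/K)\cong\Z_p$ on which $\sigma$ acts trivially, so $(\Gamma^{\mathrm{ab}})^{+}$ has rank at least $1$; comparing with the total rank $1$ shows $(\Gamma^{\mathrm{ab}})^{-}$ has rank $0$ and is therefore finite, while all of the $\Z_p$-free part lies in $(\Gamma^{\mathrm{ab}})^{+}$.

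The main obstacle is to upgrade this to the statement that the entire torsion submodule $\mathcal{T}_K$ of $\Gamma^{\mathrm{ab}}$ lies in the minus part, equivalently that $(\Gamma^{\mathrm{ab}})^{+}$ is torsion free. The plan is a descent argument along $K/\Q$: because $[K:\Q]=2$ is prime to $p$, taking $\sigma$-invariants is exact on $\Z_p[\Gal(K/\Q)]$-modules and identifies $(\Gamma^{\mathrm{ab}})^{+}=(\Gamma^{\mathrm{ab}})^{\sigma}$ with the corresponding invariant of the torsion module of the base field, giving $\mathcal{T}_K^{\sigma}\cong\mathcal{T}_{\Q}$ (this functoriality is the one developed by Gras in the theory of $p$-rational fields). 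Since $\Q$ has unit group $\{\pm1\}$ and trivial $p$-class group, its maximal abelian pro-$p$ extension unramified outside $p$ is just $\Q_{\infty}$, so $\mathcal{T}_{\Q}=0$ and hence $(\Gamma^{\mathrm{ab}})^{+}$ is torsion free. Combining the pieces, $K$ is $p$-rational if and only if $\Gamma^{\mathrm{ab}}\cong\Z_p$ if and only if $(\Gamma^{\mathrm{ab}})^{-}=0$ if and only if no dihedral tower exists. Establishing the descent isomorphism $\mathcal{T}_K^{\sigma}\cong\mathcal{T}_{\Q}$ cleanly --- rather than merely up to finite ramification-theoretic terms --- is the step I expect to require the most care.
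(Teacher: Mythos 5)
Your proof is correct, but it takes a genuinely different route from the paper's. For the converse, the paper argues group-theoretically inside the non-abelian field $M$: it passes to the Galois closure $F$ of a minimal non-abelian $p$-extension of $K$ in $M$ and invokes results of Mazur and Rubin to produce a normal subgroup $H\le\Gal(F/K)$ of index $p$ on which conjugation by $\sigma$ induces inversion, so that $F^H$ is the desired $D_p$-field. You never leave $M^{\mathrm{ab}}$: you split $\Gamma^{\mathrm{ab}}$ into $\sigma$-eigenspaces, show that dihedral towers correspond exactly to surjections $(\Gamma^{\mathrm{ab}})^-\twoheadrightarrow\Z/p\Z$, and reduce everything to showing that the $\Z_p$-torsion of $\Gamma^{\mathrm{ab}}$ lies in the minus part. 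The descent step you flag as delicate is genuine but standard and clean: since $[K:\Q]=2$ is prime to $p$, invariants coincide with coinvariants, and $(\Gamma^{\mathrm{ab}})^{+}$ is identified with the Galois group over $\Q$ of the maximal abelian pro-$p$ extension of $\Q$ unramified outside $p$, which by Kronecker--Weber is $\Z_p$; hence the plus part is torsion free and non-$p$-rationality (torsion in $\Gamma^{\mathrm{ab}}$, given Leopoldt) forces $(\Gamma^{\mathrm{ab}})^{-}\neq 0$. As for what each approach buys: the paper's avoids eigenspace bookkeeping but its converse takes as its starting point that $M/K$ is non-abelian, whereas your starting point is the literal definition of non-$p$-rationality, and the $L$ you construct is abelian over $K$ while non-abelian over $\Q$, so your construction goes through without needing to know $M\neq M^{\mathrm{ab}}$; the price is importing the prime-to-$p$ descent for the torsion module in place of the Mazur--Rubin lemma.
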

\begin{proof}
    For sake of contradiction, suppose that $K$ is $p$-rational and that $L/K/\Q$ is a tower of extensions satisfying (i) and (ii). Since $K$ is totally real and assumed to be $p$-rational, Lemma \ref{lem: Zp - p-rational} implies that $M=K\Q_\infty$. This extension is abelian over $\Q$ (as it is the compositium of two abelian extensions). Since $L/K$ has degree $p$ and is unramified outside of $\Sigma$, it must be contained in $M$. But $M$ is abelian over $\Q$ and hence $L$ must be abelian over $\Q$. Since $D_p$ is non-abelian for $p>2$ no such extension can exist.  

    We now show that if $K$ is not $p$-rational then an extension $L$ satisfying (i) and (ii) exist. Suppose $K$ is not $p$-rational, that is, $M \neq M^{ab}$. Let $F$ be the Galois closure (over $\Q$) of any minimal non-abelian $p$-extension of $K$ contained in $M$. Such a field must exist as $M$ is non-abelian and the Galois closure of any such non-Galois extension is also in $M$.  By the Galois Correspondence, $\Gal(F/K)$ is a normal subgroup of $\Gal(F/\Q)$. We let the non-trivial element of $\Gal(K/\Q)$ act on $\Gal(F/K)$ by conjugation. Since $F/K$ is non-abelian, this action is non-trivial. Combining Corollary 3.2 and Lemma 4.1 of \cite{MazurRubin2008}, we deduce that $\Gal(F/K)$ contains a normal subgroup $H$ such that $|\Gal(F/K) : H| =p$ and conjugation induces the automorphism $x\to x^{-1}$ on the quotient $\Gal(F/K)/H$. Let $L=F^H$ be the fixed field of $H$ in $F$. Then $L$ is a degree $p$ Galois extension of $K$ with Galois group $G= \Gal(F/K)/H$. In particular, $\Gal(K/\Q)$ is normal in $G$ and the action by conjugation induces the automorphism $x\to x^{-1}$ of $G$. We deduce that $\Gal(L/\Q) = \Gal(K/\Q) \rtimes \Gal(F/K)/H \cong D_p$. As $L\subseteq F \subseteq M$, $L$ must be unramified outside of the primes above $p$ in $K$. 
\end{proof}
\noindent Combining the previous theorem and Corollary \ref{cor: p-rational - AAC} we deduce Theorem \ref{thm: dihedral}. \\

\vspace*{0.25cm}
\noindent
\textbf{Acknowledgments.} The author would like to thank Ram Murty, Lawrence Washington, Hershy Kisilevsky, and Antonio Lei for their helpful comments on an earlier versions of this work which helped improve the presentation. The author would also like to thank  Abhishek Bharadwaj their helpful comments during the preparation of this manuscript.  
\renewcommand\refname{References}
\bibliography{References}

\end{document}